\documentclass[10pt]{article}
   \vfuzz2pt
   \usepackage{graphicx}
   \usepackage{epsfig}
   \usepackage{amssymb}
   \usepackage{amsmath}
   \usepackage{amsthm}
   \voffset=-2cm
   \setlength{\textwidth}{16.5cm}
   \setlength{\textheight}{23.1cm}
   \setlength{\topmargin}{3cm}
   \addtolength{\topmargin}{-1in}
   \addtolength{\topmargin}{-1\headheight}

   \addtolength{\topmargin}{-1\headsep}
   \setlength{\oddsidemargin}{2.5cm}
   \addtolength{\oddsidemargin}{-1in}

   \setlength{\evensidemargin}{2.5cm}
   \addtolength{\evensidemargin}{-1in}

   \newtheorem{lemma}{Lemma}[section]
   \newtheorem{theorem}{Theorem}[section]

   {\end{description}}

   {\hfill $\bullet$ \\}


   \newcommand{\be}{\begin{equation}}
   \newcommand{\ee}{\end{equation}}


   %

   %

   %

   %
   \begin{document}
    \title{A strong two-stage explicit/implicit approach combined with mixed finite element methods for a three-dimensional nonlinear radiation-conduction model in anisotropic media}
   \author{Eric Ngondiep}
   \date{$^{\text{\,1\,}}$\small{Department of Mathematics and Statistics, College of Science, Imam Mohammad Ibn Saud\\ Islamic University
        (IMSIU), $90950$ Riyadh $11632,$ Saudi Arabia.}\\
     \text{\,}\\
       $^{\text{\,2\,}}$\small{Research Center for Water and Climate Change, Institute of Geological and Mining Research, 4110 Yaounde-Cameroon.}\\
     \text{\,}\\
        \textbf{Email addresses:} ericngondiep@gmail.com/engondiep@imamu.edu.sa}

   \maketitle

   \textbf{Abstract.}
   This paper develops a strong computational approach to simulate a three-dimensional nonlinear radiation-conduction model in optically thick media, subject to suitable initial and boundary conditions. The space derivatives are approximated by the mixed finite element method ($\mathcal{P}_{p}/\mathcal{P}_{p-1}$), while the interpolation technique is employed in two stages to approximate the time derivative. The proposed strategy is so-called, a strong two-stage explicit/implicit computational technique combined with mixed finite element method. Specifically, the new algorithm should be observed as a predictor-corrector numerical scheme. Additionally, it efficiently treats the time derivative term and provides a necessary requirement on time step for stability. Under this time step limitation, the stability is deeply analyzed whereas the convergence order is numerically computed in the $L^{2}$-norm. The theoretical results suggest that the developed approach is stable and temporal second-order accurate. Some numerical experiments are performed to confirm the theory, to establish that the constructed method is spatial fourth-order convergent and to demonstrate the practical applicability and computational efficiency of the numerical scheme.\\
    \text{\,}\\

   \ \noindent {\bf Keywords:} $3D$ radiation-conduction model in anisotropic media, simplified $SP_{3}$ approximations, two-stage explicit/implicit computational technique, mixed finite element formulation ($\mathcal{P}_{p}/\mathcal{P}_{p-1}$), stability analysis.\\
   \\
   {\bf AMS Subject Classification (MSC): 65M12, 65M06}.

  \section{Introduction}\label{sec1}

   \text{\,\,\,\,\,\,\,\,\,\,}Modeling and simulating radiative heat transfer is crucial in various high-temperature applications, including strong industrial furnaces \cite{13bs,16bs}, gas turbine combustion chambers \cite{34bs}, continuous casting \cite{8bs,32bs}, and glass manufacturing \cite{14bs}. The key processes used in simulating heat transport in these applications are conduction and radiation mechanisms, as well as convection and chemical reactions. In contrast to radiation, which transmits heat at the speed of light and can affect objects far away from the heat source, conduction occurs over a longer time scale and transports heat near the heat source. Radiative transport is especially important in these applications at high temperatures and should not be overlooked when modeling them. For example, research on semi-transparent materials has shown that radiation and conduction are not the exclusive methods for measuring heat transport. In fact, the media temperature in many annealing methods exceeds $1000 K$, and conduction may be dominated by radiation. In these applications, radiative heat transfer must be mathematically described by solving integro-differential equations for radiative transfer coupled with a class of parabolic partial differential equations (PDEs) for the medium's thermal behavior (for example, see \cite{29bs}). However, because of the wide set of dependent unknowns, the coupled radiation-conduction model, and specular reflecting boundary conditions, these equations fall in the class of complex time-dependent PDEs \cite{1en,2en,3en,6en,4en,5en,7en} whose the computation of analytical solutions is too difficult and sometimes impossible.\\

   In this paper, we consider the following three-dimensional nonlinear radiation-conduction model in optically thick media defined in \cite{bs} as
   \begin{equation}\label{1}
    \begin{array}{c}
    \partial_{t}T-\nabla\cdot(M\nabla T)=\frac{k_{0}}{\tau^{2}}\nabla\left(-4\pi f(T)+\int_{\mathcal{S}}I(x,s)ds\right), \text{\,\,\,\,on\,\,\,}
     \Omega\times[0,\text{\,}T_{f}]\\
     \text{\,}\\
       \tau s^{t}\nabla I+(k_{0}+\sigma_{0})I=kf(T)+\frac{\sigma_{0}}{4\pi}\int_{\mathcal{S}}I(x,s)ds, \text{\,\,\,\,\,}(x,s)\in\Omega\times \mathcal{S}\\
    \end{array}
   \end{equation}
   subject to initial condition
   \begin{equation}\label{2}
    T(x,0)=T_{0}(x),  \text{\,\,\,\,for\,\,\,}x\in\Omega\cup\Gamma
   \end{equation}
   and boundary conditions
   \begin{equation}\label{3}
    \begin{array}{c}
     \tau\vec{n}(x)^{t}M\nabla T+c_{m}(T-T_{m})=-\alpha\pi(f(T)-f(T_{m})), \text{\,\,\,\,on\,\,\,}\Gamma\times[0,\text{\,}T_{f}]\\
     \text{\,}\\
      I(x,s)=f(T_{m}), \text{\,\,\,\,\,}(x,s)\in\Gamma^{-}\times\mathcal{S}\\
    \end{array}
   \end{equation}
   where $\partial_{t}T$ denotes $\frac{\partial T}{\partial t}$, $\Omega$ is a bounded domain of an absorbing and emitting material in $\mathbb{R}^{3}$, $\Gamma$
   represents its boundary and $T_{f}$ is the final time. $k_{0}$, $c_{m}$, $\sigma_{0}$, $\tau$, $\alpha$, $T_{m}$ and $M$ designate the absorption coefficient, convective
   heat transfer coefficient, scattering coefficient, diffusion scale, hemispheric surface emissivity, ambient temperature of the surrounding and thermal conductivity, respectively. $T$ and $I$ are the unknown temperature and radiative intensity, respectively. $\mathcal{S}$ denotes the unit sphere, $\vec{n}(x)$ is the outward normal vector at point $x$ to the boundary $\Gamma$, $u^{t}$ is the transpose of a vector $u$, and $f(T)$ represents the spectral intensity of the black-body radiation defined as
   \begin{equation}\label{3a}
    f(T)=c_{bs}T^{4},
   \end{equation}
   where $c_{bs}=5.67\times10^{-8}$, is the Stefan-Boltzmann constant given in \cite{29bs}. It's worth mentioning that the boundary $\Gamma^{-}$ is defined as:
   $\Gamma^{-}=\{x\in\Gamma,\text{\,\,}s^{t}\vec{n}(x)<0,\text{\,\,}s\in\mathcal{S}\}.$\\

    In the literature, several authors modeled radiative heat transport problems in participating media using the system of equations $(\ref{1})$. For more details, interested readers can consult \cite{29bs} and references therein. Numerical approaches such as discrete ordinates and zonal techniques have been developed to overcome these problems (\cite{42bs}). However, computational techniques for radiative heat transport in three dimensions require significant processing time and memory capacity. Large-scale numerical simulations necessitate efficient and stable numerical solvers to solve the radiation-conduction equations in anisotropic media. Over the last decade, researchers in radiative heat transfer have focused on developing simple approximations that accurately predict fundamental physical events at a reasonable computational cost. The simplified spherical harmonics technique (also known as $SP_{N}$ approximations) is the most widely used approximate model when the medium is isotropic and optically thick \cite{bs}. In \cite{36bs}, the results of radiative transmission in combustion systems were validated using $SP_{N}$ approximations and experimental data. It is worth mentioning that the primary advantage of the $SP_{N}$ approximations considers a set of elliptic-type PDEs independent of angular direction rather than integro-differential radiative transfer equations. Furthermore, some comparative results show that in  optically thick media, $SP_{N}$ models approximate the whole radiative heat transfer problem at a very low computational cost. High-temperature thermal engineering
    applications require anisotropy, as conductivity becomes nonlinear with temperature. Computational methods for solving $SP_{N}$ approximations have been widely discussed
    in the literature. Researchers have proposed a block Arnoldi method in \cite{43bs}, implemented continuous finite element methods in \cite{1bs}, investigated
    discontinuous finite element methods in \cite{21bs}, used enriched partition of unity finite element methods to solve $SP_{N}$ models and developed
    adaptive methods to accurately solve $SP_{N}$ models in \cite{9bs}. These strategies resulted in a significant increase in processing time for conduction-radiation problems requiring linear conductivity in isotropic materials. However, further study and development are needed to address nonlinear conduction-radiation difficulties in anisotropic media. All finite element methods (FEMs) for $SP_{N}$ problems use the unified approach, which use finite element spaces for both temperature and radiative solutions. This technique keeps the temperature solution at its highest precision while allowing the radiative solutions in the $SP_{N}$ approximations to lose accuracy. To overcome this limitation, we construct a two-stage explicit/implicit technique combined with the mixed finite element formulation ($\mathcal{P}_{p}/\mathcal{P}_{p-1}$), for solving the $SP_{3}$ model. The developed computational technique should guarantee efficient solutions for both temperature and related unknown functions.\\

  The new computational technique approximates the time derivative at the first stage (predictor step) using a second-order explicit scheme while an implicit second-order method is employed at the second stage (corrector step) so that the error increased at the predictor step is balanced by the one decreased at the corrector step. As a result, the stability of the constructed predictor-corrector scheme is maintained. Additionally, the mixed finite element formulation used to discretize the space derivatives considers two different spaces. Moreover, the mixed $\mathcal{P}_{p}/\mathcal{P}_{p-1}$ finite elements are applied with high-order elements $\mathcal{P}_{p}$ for the temperature solution and lower-order elements $\mathcal{P}_{p-1}$ for the radiative solutions. Thus, the nonlinear conduction-radiation problem in optically thick anisotropic media can be solved quickly and accurately. The development of the new algorithm deals with the three-dimensional $SP_{3}$ equations with nonlinear conductivity based on temperature, representing anisotropic radiative heat transport. Furthermore, some numerical examples for conduction-radiation problems, including a nonlinear problem in a three-dimensional anisotropic medium are carried out to show the efficiency of the proposed approach. The studies suggest that the proposed computational technique is faster and more efficient than a broad range of numerical methods discussed in the literature for solving coupled conduction-radiation problems \cite{bs,43bs}. It's worth recalling that the highlights of this study considers the following three items.\\

   \begin{description}
     \item[1.] Detailed description of the strong two-stage explicit/implicit computational technique combined with the mixed finite element formulation ($\mathcal{P}_{p}/\mathcal{P}_{p-1}$), for simulating the $SP_{3}$ approximations corresponding to the initial-boundary value problem $(\ref{1})$-$(\ref{3})$.
     \item[2.] Stability analysis of the proposed numerical approach under a necessary condition on the time step.
     \item[3.] Numerical examples to confirm the theory and validate the accuracy and efficiency of the new algorithm.
   \end{description}

   The remainder of the paper is organized as follows. Section $\ref{sec2}$ deals with the mathematical formulation of the nonlinear $SP_{3}$ approximations of radiation-conduction equations. In Section $\ref{sec3}$, we provide a detailed description of the strong two-stage explicit/implicit approach combined with the mixed finite element method in a computed solution of the nonlinear $SP_{3}$ approximations while the stability of the proposed numerical scheme is established under a necessary condition on the time step in Section $\ref{sec4}$. Finally, in Section $\ref{sec5}$, some numerical experiments are carried out to confirm the theory and to demonstrate the utility and performance of the new algorithm whereas the general conclusions and our future works are provided in Section $\ref{sec6}$.\\

     \section{Mathematical formulation of $SP_{3}$ approximations of nonlinear radiation-conduction equations}\label{sec2}

     \text{\,\,\,\,\,\,\,\,\,\,}In this section, we present a mathematical formulation of the nonlinear radiative heat transfer problem $(\ref{1})$ using the simplified spherical harmonics method ($SP_{3}$ approximations). These equations deal with several limitations when the numerical solutions are directly obtained from the differential equations governing the combustion, heat conduction and flow. The $SP_{3}$ problems are considered as an important approach to overcome these difficulties. In \cite{9bs,21bs}, the authors have shown that these simplified models are obtained utilizing an asymptotic analysis and they present an appropriate performance optically thick medium. In this paper, we construct a two-step explicit/implicit approach combined with a mixed finite element technique for solving the $SP_{3}$ problems in anisotropic and heterogeneous media and defined in \cite{bs} as

    \begin{equation}\label{4}
    \begin{array}{c}
    \partial_{t}T-\nabla\cdot(M\nabla T)=-4\pi k_{0}\tau^{-2}f(T)+k_{0}\tau^{-2}(\gamma_{2}-\gamma_{1})^{-1}(\gamma_{2}\phi_{1}-\gamma_{1}\phi_{2}), \text{\,\,\,\,on\,\,\,}
     \Omega\times[0,\text{\,}T_{f}]\\
     \text{\,}\\
    -\tau^{2}\mu_{1}^{2}\nabla\cdot(L\nabla\phi_{1})+k_{0}\phi_{1} = 4\pi k_{0}f(T), \text{\,\,\,\,on\,\,\,} \Omega\times[0,\text{\,}T_{f}]\\
    \text{\,}\\
    -\tau^{2}\mu_{2}^{2}\nabla\cdot(L\nabla\phi_{2})+k_{0}\phi_{2} = 4\pi k_{0}f(T), \text{\,\,\,\,on\,\,\,} \Omega\times[0,\text{\,}T_{f}]\\
    \end{array}
   \end{equation}
   with initial condition
   \begin{equation}\label{5}
    T(x,0)=T_{0}(x),  \text{\,\,\,\,for\,\,\,}x\in\overline{\Omega}=\Omega\cup\Gamma,
   \end{equation}
   and boundary conditions
    \begin{equation}\label{6}
    \begin{array}{c}
    \tau(M\nabla T)^{t}\vec{n}(x)+c_{m}(T-T_{m})=\alpha\pi(f(T_{m})-f(T)), \text{\,\,\,\,on\,\,\,} \Gamma\times[0,\text{\,}T_{f}]\\
     \text{\,}\\
    \tau(L\nabla\phi_{1})^{t}\vec{n}(x)+\frac{\alpha_{1}}{3}\phi_{1}=-\frac{1}{3}(\beta_{2}\phi_{2}-\eta_{1}f(T_{m})),\text{\,\,\,on\,\,} \Gamma\times[0,\text{\,}T_{f}]\\
    \text{\,}\\
    \tau(L\nabla\phi_{2})^{t}\vec{n}(x)+\frac{\alpha_{2}}{3}\phi_{2}=-\frac{1}{3}(\beta_{1}\phi_{1}-\eta_{2}f(T_{m})),\text{\,\,\,on\,\,} \Gamma\times[0,\text{\,}T_{f}]\\
    \end{array}
   \end{equation}
    where a detailed analysis of positive parameters: $\alpha_{j}$, $\beta_{j}$, $\mu_{j}$, $\eta_{j}$, and $\gamma_{j}$, for $j=1,2$, are given in \cite{bs} (see \textbf{Table 1}).\\

     \underline{\textbf{Table 1}}(Parameters and related values discussed in the $SP_{3}$ model).
    \begin{equation*}
    \begin{tabular}{cccc}
      \hline
      Parameters & Values & Parameters & Values \\
      \hline
      $\alpha_{1}$ & $2.3984$ & $\alpha_{2}$ & $1.1432$ \\
      $\beta_{1}$ & $4.71\times10^{-2}$ & $\beta_{2}$ & $1.612\times10^{-1}$ \\
      $\gamma_{1}$ & $-1.6221\times10^{4}$ & $\gamma_{2}$ & $3.0617$ \\
      $\mu_{1}$ & $5.888\times10^{-3}$ & $\mu_{2}$ & $1.4915$ \\
      $\eta_{1}$ & $3.21656\times10^{1}$ & $\eta_{2}$ & $1.49583\times10^{1}$ \\
      \hline
    \end{tabular}
    \end{equation*}

 For the sake of stability analysis and convergence of the new algorithm, the parameters $k_{0}$ and $\tau$ should be chosen so that that $k_{0}>\frac{\tau}{6}\max\{\frac{\beta_{1}^{2}\mu_{2}^{2}}{\alpha_{2}}, \text{\,}\frac{\beta_{2}^{2}\mu_{1}^{2}}{\alpha_{1}}\}$. It is worth mentioning that the initial-boundary value problem $(\ref{4})$-$(\ref{6})$ forms the mixed elliptic-parabolic PDEs and should be solved by efficient numerical methods such as the strong two-stage explicit/implicit schemes combined with the mixed FEM ($\mathcal{P}_{p}/\mathcal{P}_{p-1}$). In fact, the errors increased in the first stage when computing the predicted solution are balanced with the ones decreased in the second stage when calculating the final solution so that the stability of the numerical approach is preserved. This procedure takes advantage of providing approximate solutions with good accuracy compared to the analytical ones. Additionally, the properties of such a class of coupled equations render the $SP_{3}$ model $(\ref{4})$-$(\ref{6})$ less cost when applying an efficient computational technique than the original radiative heat transfer problem $(\ref{1})$-$(\ref{3})$. The tensors $M$ and $L$ are given by
      \begin{equation}\label{7a}
    M=P(\theta)\begin{bmatrix}
                 m_{11}(T) & 0 & 0 \\
                 0 & m_{22}(T) & 0 \\
                 0 & 0 & m_{33}(T) \\
               \end{bmatrix}P(\theta)^{-1},\text{\,\,\,\,}L=P(\theta)\begin{bmatrix}
                 \frac{1}{3(k_{0}+\sigma_{1})} & 0 & 0 \\
                 0 & \frac{1}{3(k_{0}+\sigma_{2})} & 0 \\
                 0 & 0 & \frac{1}{3(k_{0}+\sigma_{3})} \\
               \end{bmatrix}P(\theta)^{-1},
   \end{equation}
     where $P(\theta)$ is the matrix defined as
      \begin{equation}\label{7aa}
    P(\theta)=\begin{bmatrix}
                 1 & 0 & 0 \\
                 0 & \cos\theta & -\sin\theta \\
                 0 & \sin\theta & \cos\theta \\
               \end{bmatrix},
   \end{equation}
    and $m_{jj}(T)$, for $j=1,2,3$, are nonlinear diagonal elements which depend on the temperature $T$. Furthermore, the functions $m_{jj}(\cdot)$ are continuous and bounded below over $\mathbb{R}$. The nonlinearity of the system of equations $(\ref{4})$-$(\ref{6})$, is due to the fact that at high temperature, the medium conductivity depends on the temperature. When $\theta=0$, $P(0)$ returns the identity matrix, as a result the given problem $(\ref{4})$-$(\ref{6})$ generates the $SP_{3}$ model presented in \cite{9bs}. Furthermore, if a nonlinear term is present in the heat equation but not in the radiative transfer equation, the $SP_{3}$ approximations for linear isotropic situations can be applied to the nonlinear case.

       \section{Detailed description of the new computational technique}\label{sec3}
        This section considers the development of a strong two-stage explicit/implicit scheme combined with the mixed FEM ($\mathcal{P}_{p}/\mathcal{P}_{p-1}$), in an approximate solution of the $SP_{3}$ model defined by equations $(\ref{4})$-$(\ref{6})$.\\

      Let $N$ and $M_{0}$ be two positive integers. Set $\sigma=\frac{T_{f}}{N}$, be the time step and $\mathcal{T}_{\sigma}=\{t_{n}=n\sigma,\text{\,\,}0\leq n\leq N\}$ be a regular partition of the interval $[0,\text{\,}T_{f}]$. Consider $\mathcal{F}_{h}$ be a uniform partition of the domain $\overline{\Omega}=\Omega\cup\Gamma$, consisting of tetrahedra $K_{l}$ of diameter $d(K_{l})$, for $l=1,2,...,M_{0}$, where $h=\max\{d(K),\text{\,\,}K\in\mathcal{F}_{h}\}$. Moreover, $h$ is the step size of a spatial mesh of the computational domain $\overline{\Omega}$. In addition, $\mathcal{F}_{h}$ satisfies the following conditions: (i) the interior of any tetrahedron is nonempty; (ii) $int(K_{l_{1}})\cap int(K_{l_{2}})=\emptyset$, whereas $K_{l_{1}}\cap K_{l_{2}}$ is either empty or a common vertex or face, for $l_{1},l_{2}=1,2,...,M_{0}$, with $l_{1}\neq l_{2}$, where $int(K)$ means the interior of $K$; (iii) the triangulation $\mathcal{F}_{h}$ is regular while the triangulation $\mathcal{F}_{\Gamma,h}$, induced on the boundary $\Gamma=\partial\Omega$ is quasi-uniform. For the convenience of writing, we set $\psi(x,t_{n})=\psi^{n}$, as the value of the function $\psi$ at the point $x$, and at time $t_{n}$.\\

     We introduce the mixed finite element spaces $\mathcal{U}_{h}$ and $\mathcal{V}_{h}$ approximating the solutions of the $SP_{3}$ equations $(\ref{4})$-$(\ref{6})$, defined as
     \begin{equation}\label{7}
     \mathcal{U}_{h}=\{T_{h}(t)\in W_{2}^{1}(\Omega):\text{\,}T_{h}(t)|_{K}\in\mathcal{P}_{p}(K),\text{\,}\forall K\in\mathcal{F}_{h},\text{\,}\forall t\in[0,\text{\,}T_{f}]\},
     \end{equation}
     \begin{equation}\label{8}
     \mathcal{V}_{h}=\{\phi_{h}(t)=(\phi_{1h}(t),\phi_{2h}(t))^{t}\in[W_{2}^{1}(\Omega)]^{2}:\text{\,}\phi_{jh}(t)|_{K}\in\mathcal{P}_{p-1}(K),\text{\,\,}\forall K\in\mathcal{F}_{h}, \text{\,\,}\forall t\in[0,\text{\,}T_{f}],\text{\,\,}j=1,2\},
      \end{equation}
      where $w^{t}$ means the transpose of the vector $w$, $\mathcal{P}_{l}(K)$ represents the space of polynomials defined on $K$ with degree less than or equal $l$ and $W_{2}^{1}(\Omega)$ is the Sobolev space defined as
      \begin{equation*}
      W_{2}^{1}(\Omega)=\{u\in L^{2}(\Omega),\text{\,}\nabla u\in[L^{2}(\Omega)]^{3}\}.
     \end{equation*}

     The spaces $L^{2}(\Omega)$ and $[L^{2}(\Omega)]^{3}$ are equipped with the inner products $\left(\cdot,\cdot\right)_{0}$ and $\left(\cdot,\cdot\right)_{\bar{0}}$, respectively, defined by
       \begin{equation}\label{10}
        \left(u_{1},u_{2}\right)_{0}=\int_{\Omega}u_{1}u_{2}d\Omega,\text{\,\,}\forall u_{1},u_{2}\in L^{2}(\Omega),\text{\,\,\,}
     \left(u,v\right)_{\bar{0}}=\int_{\Omega}u^{t}v d\Omega,\text{\,\,for\,\,} u,v\in [L^{2}(\Omega)]^{3}.
     \end{equation}

      Consider the bilinear operators $a(\cdot,\cdot)$, $<\cdot,\cdot>$, $\left(\cdot,\cdot\right)_{\Delta}$, $A(\cdot,\cdot)$ and $<\cdot,\cdot>_{\Delta}$, defined as
        \begin{equation*}
        a(u_{1},u_{2})=\left(M\nabla u_{1},\nabla u_{2}\right)_{\bar{0}},\text{\,\,}<u_{1},u_{2}>=\int_{\Gamma}u_{1}u_{2}d\Gamma,\text{\,\,}\forall u_{1},u_{2}\in W_{2}^{1}(\Omega),\text{\,\,}
       \left(u,v\right)_{\Delta}=\begin{pmatrix}
                                 \left(u_{1},v_{1}\right)_{0} \\
                                 \left(u_{2},v_{2}\right)_{0} \\
                               \end{pmatrix},
     \end{equation*}
       \begin{equation}\label{11}
       A(u,v)=\begin{pmatrix}
                                 \left(L\nabla u_{1},\nabla v_{1}\right)_{\bar{0}} \\
                                 \left(L\nabla u_{2},\nabla v_{2}\right)_{\bar{0}} \\
                               \end{pmatrix},\text{\,\,}<u,v>_{\Delta}=\begin{pmatrix}
                                 <u_{1},v_{1}> \\
                                 <u_{2},v_{2}> \\
                               \end{pmatrix},\text{\,\,for\,\,} u=(u_{1},u_{2})^{t},v=(v_{1},v_{2})^{t}\in [W_{2}^{1}(\Omega)]^{2}.
      \end{equation}

      The following integration by parts will play an important role in our analysis
      \begin{equation}\label{12}
      \left(\Delta u_{1},u_{2}\right)_{0}=\int_{\Gamma}u_{2}(\nabla u_{1})^{t}\vec{n}d\Gamma-\left(\nabla u_{1},\nabla u_{2}\right)_{\bar{0}},\text{\,\,}
       \left(\bar{\Delta} v,w\right)_{\Delta}=\begin{pmatrix}
                                 \int_{\Gamma}w_{1}(\nabla v_{1})^{t}\vec{n}d\Gamma \\
                                 \text{\,}
                                 \int_{\Gamma}w_{2}(\nabla v_{2})^{t}\vec{n}d\Gamma \\
                               \end{pmatrix}-\begin{pmatrix}
                                 \left(\nabla v_{1},\nabla w_{1}\right)_{\bar{0}} \\
                                 \text{\,}
                                 \left(\nabla v_{2},\nabla w_{2}\right)_{\bar{0}} \\
                               \end{pmatrix},
      \end{equation}
       where $\bar{\Delta} v=(\Delta v_{1},\Delta v_{2})^{t}$, for $u_{1}\in H^{2}(\Omega)$, $u_{2}\in W_{2}^{1}(\Omega)$, $v=(v_{1},v_{2})^{t}\in[H^{2}(\Omega)]^{2}$ and
       $w=(w_{1},w_{2})^{t}\in[W_{2}^{1}(\Omega)]^{2}$, where $\vec{n}$ designates the unit outward normal vector on $\Gamma$.\\

      Now, we should provide the weak formulation of the boundary-value problem $(\ref{4})$ and $(\ref{6})$. Let $(w,(\psi_{1},\psi_{2})^{t})\in[W_{2}^{1}(\Omega)]^{3}$. Multiplying the first, second and third equations in system $(\ref{4})$ by $w$, $\psi_{1}$ and $\psi_{2}$, respectively, and using the scalar products defined in equation $(\ref{10})$ together with the integration by parts $(\ref{12})$ to get
     \begin{equation*}
    \begin{array}{c}
    \left(\partial_{t}T,w\right)_{0}+a(T,w)=\int_{\Gamma}w(M\nabla T)^{t}\vec{n}d\Gamma-4\pi k_{0}\tau^{-2}\left(f(T),w\right)_{0}+k_{0}\tau^{-2}(\gamma_{2}-\gamma_{1})^{-1}\left((\gamma_{2}\phi_{1}-\gamma_{1}\phi_{2}),w\right)_{0},\\
     \text{\,}\\
    \tau^{2}\mu_{1}^{2}\left(L\nabla\phi_{1},\nabla \psi_{1}\right)_{\bar{0}}+k_{0}\left(\phi_{1},\psi_{1}\right)_{0}=\tau^{2}\mu_{1}^{2}\int_{\Gamma}\psi_{1}(L\nabla \phi_{1})^{t}\vec{n}d\Gamma + 4\pi k_{0}\left(f(T),\psi_{1}\right)_{0},\\
    \text{\,}\\
    \tau^{2}\mu_{2}^{2}\left(L\nabla\phi_{2},\nabla \psi_{2}\right)_{0}+k_{0}\left(\phi_{2},\psi_{2}\right)_{0}=\tau^{2}\mu_{2}^{2}\int_{\Gamma}\psi_{2}(L\nabla \phi_{2})^{t}\vec{n}d\Gamma+4\pi k_{0}\left(f(T),\psi_{2}\right)_{0}.\\
    \end{array}
   \end{equation*}

    Utilizing the boundary conditions $(\ref{6})$ and the scalar product $<\cdot,\cdot>$, given in relation $(\ref{11})$, these equations are equivalent to
    \begin{equation}\label{13}
    \begin{array}{c}
    \left(\partial_{t}T,w\right)_{0}+a(T,w)=\tau^{-1}<c_{m}(T_{m}-T)+\alpha\pi(f(T_{m})-f(T)),w>-4\pi k_{0}\tau^{-2}\left(f(T),w\right)_{0}+\\ k_{0}\tau^{-2}(\gamma_{2}-\gamma_{1})^{-1}\left((\gamma_{2}\phi_{1}-\gamma_{1}\phi_{2}),w\right)_{0},\\
     \text{\,}\\
    \tau^{2}\mu_{1}^{2}\left(L\nabla\phi_{1},\nabla \psi_{1}\right)_{\bar{0}}+k_{0}\left(\phi_{1},\psi_{1}\right)_{0}= -\frac{\tau\mu_{1}^{2}}{3}<\alpha_{1}\phi_{1}+\beta_{2}\phi_{2}-\eta_{1}f(T_{m}),\psi_{1}> +4\pi k_{0}\left(f(T),\psi_{1}\right)_{0},\\
    \text{\,}\\
    \tau^{2}\mu_{2}^{2}\left(L\nabla\phi_{2},\nabla \psi_{2}\right)_{0}+k_{0}\left(\phi_{2},\psi_{2}\right)_{0}=
     -\frac{\tau\mu_{2}^{2}}{3}<\alpha_{2}\phi_{2}+\beta_{1}\phi_{1}-\eta_{2}f(T_{m}),\psi_{2}>+4\pi k_{0}\left(f(T),\psi_{2}\right)_{0}.\\
    \end{array}
   \end{equation}

   Setting
     \begin{equation*}
      \psi=\begin{pmatrix}
             \psi_{1} \\
             \psi_{2} \\
           \end{pmatrix},\text{\,\,}\phi=\begin{pmatrix}
                                         \phi_{1} \\
                                         \phi_{2} \\
                                       \end{pmatrix},\text{\,\,}\overline{\mu}=\begin{pmatrix}
                                                                               \mu_{1} \\
                                                                               \mu_{2} \\
                                                                             \end{pmatrix},\text{\,\,}\overline{\eta}=\begin{pmatrix}
                                                                                                                      \eta_{1} \\
                                                                                                                      \eta_{2} \\
                                                                                                                    \end{pmatrix},\text{\,\,}\overline{\alpha}=\begin{pmatrix}
                                                                                                                      \alpha_{1} \\
                                                                                                                      \alpha_{2} \\
                                                                                                                    \end{pmatrix},\text{\,\,}\overline{\beta}=\begin{pmatrix}
                                                                                                                      \beta_{1} \\
                                                                                                                      \beta_{2} \\
                                                                                                                    \end{pmatrix},
     \end{equation*}
       \begin{equation}\label{not}
      F(u)=\begin{pmatrix}
             f(u) \\
             f(u) \\
           \end{pmatrix},\text{\,\,}G_{\overline{\alpha}\overline{\beta}}(\phi)=\begin{pmatrix}
                                                      \alpha_{1} & \beta_{2} \\
                                                      \beta_{1} & \alpha_{2} \\
                                                    \end{pmatrix}\phi,
      \end{equation}
     and using the operators $A(\cdot,\cdot)$, $\left(\cdot,\cdot\right)_{\Delta}$, and $<\cdot,\cdot>_{\Delta}$, defined in relation $(\ref{11})$, system $(\ref{13})$ becomes
     \begin{equation}\label{14}
    \begin{array}{c}
    \left(\partial_{t}T,w\right)_{0}+a(T,w)=\tau^{-1}<c_{m}(T_{m}-T)+\alpha\pi(f(T_{m})-f(T)),w>-4\pi k_{0}\tau^{-2}\left(f(T),w\right)_{0}+\\ k_{0}\tau^{-2}(\gamma_{2}-\gamma_{1})^{-1}\left(\gamma_{2}\phi_{1}-\gamma_{1}\phi_{2},w\right)_{0},\text{\,\,\,\,}\forall w\in W_{2}^{1}(\Omega),\\
     \text{\,}\\
    \tau^{2}\overline{\mu}^{.2}.*A(\phi,\psi)+k_{0}\left(\phi,\psi\right)_{\Delta}=-\frac{\tau}{3}\overline{\mu}^{.2}.*(<G_{\overline{\alpha}\overline{\beta}}(\phi),\psi>_{\Delta}
    -\overline{\eta}.*<F(T_{m}),\psi>_{\Delta})+\\
     4\pi k_{0}\left(F(T),\psi\right)_{\Delta},\text{\,\,\,\,}\forall \psi=(\psi_{1},\psi_{2})^{t}\in [W_{2}^{1}(\Omega)]^{2},\\
    \end{array}
   \end{equation}
    where $"^{.2}"$ denotes the element-by-element square power and $".*"$ means the element-by-element multiplication.\\

   To construct the new method, we should approximate the function $T(t)$ at the discrete times: $t_{n-\frac{1}{2}}$, $t_{n}$ and $t_{n+\frac{1}{2}}$ (respectively, $t_{n}$, $t_{n+\frac{1}{2}}$ and $t_{n+1}$). The interpolation of $T(t)$ at points: $t_{n-\frac{1}{2}}$, $t_{n}$ and $t_{n+\frac{1}{2}}$, gives
      \begin{equation*}
        T(t)=\frac{(t-t_{n-\frac{1}{2}})(t-t_{n})}{(t_{n+\frac{1}{2}}-t_{n-\frac{1}{2}})(t_{n+\frac{1}{2}}-t_{n})}T^{n+\frac{1}{2}}+
        \frac{(t-t_{n-\frac{1}{2}})(t-t_{n+\frac{1}{2}})}{(t_{n}-t_{n-\frac{1}{2}})(t_{n}-t_{n+\frac{1}{2}})}T^{n}+
       \end{equation*}
       \begin{equation*}
       \frac{(t-t_{n})(t-t_{n+\frac{1}{2}})}{(t_{n-\frac{1}{2}}-t_{n})(t_{n-\frac{1}{2}}-t_{n+\frac{1}{2}})}T^{n-\frac{1}{2}}+\frac{1}{6}(t-t_{n-\frac{1}{2}})(t-t_{n})
         (t-t_{n+\frac{1}{2}})\partial_{ttt}T(\epsilon(t)),
       \end{equation*}
       where $\epsilon(t)$ is between the minimum and maximum of $t_{n-\frac{1}{2}}$, $t_{n}$, $t_{n+\frac{1}{2}}$ and $t$. The time derivative provides
       \begin{equation*}
        \partial_{t}T(t)=\frac{2}{\sigma^{2}}[(2t-t_{n-\frac{1}{2}}-t_{n})T^{n+\frac{1}{2}}-2(2t-t_{n-\frac{1}{2}}-t_{n+\frac{1}{2}})T^{n}+(2t-t_{n}-t_{n+\frac{1}{2}})T^{n-\frac{1}{2}}]+
       \end{equation*}
       \begin{equation}\label{15}
       \frac{1}{6}\{(t-t_{n-\frac{1}{2}})(t-t_{n})(t-t_{n+\frac{1}{2}})\frac{d}{dt}(\partial_{ttt}T(\epsilon(t)))+\partial_{ttt}T(\epsilon(t))\frac{d}{dt}
       [(t-t_{n-\frac{1}{2}})(t-t_{n})(t-t_{n+\frac{1}{2}})]\}.
       \end{equation}

       For $t=t_{n}$, equation $(\ref{15})$ provides
       \begin{equation}\label{16}
        \partial_{t}T^{n}=\frac{1}{\sigma}(T^{n+\frac{1}{2}}-T^{n-\frac{1}{2}})+\frac{\sigma^{2}}{12}\partial_{ttt}T(\epsilon^{n}).
       \end{equation}

       Replacing into the system of equations $(\ref{14})$ $t$ with $t_{n}$, substituting equation $(\ref{16})$ into the new equation  and rearranging terms, we obtain
       \begin{equation}\label{17}
    \begin{array}{c}
    \left(T^{n+\frac{1}{2}},w\right)_{0}+\sigma a(T^{n},w)=\left(T^{n-\frac{1}{2}},w\right)_{0}
     +\sigma\tau^{-1}<c_{m}(T_{m}-T^{n})+\alpha\pi(f(T_{m})-f(T^{n})),w>-\\ 4\pi k_{0}\tau^{-2}\sigma\left(f(T^{n}),w\right)_{0}+ k_{0}\tau^{-2}(\gamma_{2}-\gamma_{1})^{-1}\sigma\left(\gamma_{2}\phi_{1}^{n}-\gamma_{1}\phi_{2}^{n},w\right)_{0}-
     \frac{\sigma^{3}}{12}\left(\partial_{ttt}T(\epsilon^{n}),w\right)_{0},\text{\,\,\,\,}\forall w\in W_{2}^{1}(\Omega),\\
     \text{\,}\\
    \tau^{2}\overline{\mu}^{.2}.*A(\phi^{n},\psi)+k_{0}\left(\phi^{n},\psi\right)_{\Delta}=-\frac{\tau}{3}\overline{\mu}^{.2}.*(<G_{\overline{\alpha}\overline{\beta}}(\phi^{n}),\psi>_{\Delta}
    -\overline{\eta}.*<F(T_{m}),\psi>_{\Delta})+\\
     4\pi k_{0}\left(F(T^{n}),\psi\right)_{\Delta},\text{\,\,\,\,}\forall \psi=(\psi_{1},\psi_{2})^{t}\in [W_{2}^{1}(\Omega)]^{2}.\\
    \end{array}
   \end{equation}

    Tracking the error term $-\frac{\sigma^{3}}{12}\left(\partial_{ttt}T(\epsilon^{n}),w\right)_{0}$, into the first equation in system $(\ref{17})$ and replacing the analytical solution $(T,\phi)=(T,(\phi_{1},\phi_{2}))\in H^{4}(0,T_{f};\text{\,}W_{2}^{2}(\Omega))\times L^{2}(0,T_{f};\text{\,}[W_{2}^{2}(\Omega)]^{2})$, with the approximate one $(T_{h}(t),\phi_{h}(t))=(T_{h}(t),(\phi_{1h}(t),\phi_{2h}(t)))\in\mathcal{U}_{h}\times\mathcal{V}_{h}$, for every $t\in[0,\text{\,}T_{f}]$, where $\mathcal{U}_{h}$ and $\mathcal{V}_{h}$ are given by equations $(\ref{7})$ and $(\ref{8})$, respectively, to get the first stage of the proposed computational technique, that is, for $n=0,1,...,N-1$,
       \begin{equation}\label{18}
    \begin{array}{c}
    \left(T_{h}^{n+\frac{1}{2}},w\right)_{0}+\sigma a(T_{h}^{n},w)=\left(T_{h}^{n-\frac{1}{2}},w\right)_{0}
     +\sigma\tau^{-1}<c_{m}(T_{m}-T_{h}^{n})+\alpha\pi(f(T_{m})-f(T_{h}^{n})),w>\\ -4\pi k_{0}\tau^{-2}\sigma\left(f(T_{h}^{n}),w\right)_{0}+ k_{0}\tau^{-2}(\gamma_{2}-\gamma_{1})^{-1}\sigma\left(\gamma_{2}\phi_{1h}^{n}-\gamma_{1}\phi_{2h}^{n},w\right)_{0},\text{\,\,\,\,}\forall w\in W_{2}^{1}(\Omega),\\
     \text{\,}\\
    \tau^{2}\overline{\mu}^{.2}.*A(\phi_{h}^{n},\psi)+k_{0}\left(\phi_{h}^{n},\psi\right)_{\Delta}=
    -\frac{\tau}{3}\overline{\mu}^{.2}.*(<G_{\overline{\alpha}\overline{\beta}}(\phi_{h}^{n}),\psi>_{\Delta}-\overline{\eta}.*<F(T_{m}),\psi>_{\Delta})\\
     +4\pi k_{0}\left(F(T_{h}^{n}),\psi\right)_{\Delta},\text{\,\,\,\,}\forall \psi=(\psi_{1},\psi_{2})^{t}\in [W_{2}^{1}(\Omega)]^{2}.
    \end{array}
   \end{equation}

    In a similar way, approximating the function $T(t)$ at the discrete times: $t_{n}$, $t_{n+\frac{1}{2}}$ and $t_{n+1}$, it is not hard to show that
       \begin{equation*}
        \partial_{t}T(t)=\frac{2}{\sigma^{2}}[(2t-t_{n}-t_{n+\frac{1}{2}})T^{n+1}-2(2t-t_{n}-t_{n+1})T^{n+\frac{1}{2}}+(2t-t_{n+\frac{1}{2}}-t_{n+1})T^{n}]+
       \end{equation*}
       \begin{equation*}
       \frac{1}{6}\{(t-t_{n})(t-t_{n+\frac{1}{2}})(t-t_{n+1})\frac{d}{dt}(\partial_{ttt}T(\overline{\epsilon}(t)))+\partial_{ttt}T(\overline{\epsilon}(t))\frac{d}{dt}
       [(t-t_{n})(t-t_{n+\frac{1}{2}})(t-t_{n+1})]\},
       \end{equation*}
        where $\overline{\epsilon}(t)$ is between the minimum and maximum of $t_{n}$, $t_{n+\frac{1}{2}}$, $t_{n+1}$ and $t$.
       For $t=t_{n+1}$, this equation results in
       \begin{equation}\label{19}
        \partial_{t}T^{n+1}=\frac{1}{\sigma}(3T^{n+1}-4T^{n+\frac{1}{2}}+T^{n})+\frac{\sigma^{2}}{12}\partial_{ttt}T(\overline{\epsilon}^{n+1}).
       \end{equation}

       Replacing into system $(\ref{14})$, $t$ with $t_{n+1}$, combining the obtained system with equation $(\ref{19})$ and rearranging terms, this gives
       \begin{equation}\label{20}
    \begin{array}{c}
    3\left(T^{n+1},w\right)_{0}+\sigma a(T^{n+1},w)=\left(4T^{n+\frac{1}{2}}-T^{n},w\right)_{0}
     +\sigma\tau^{-1}<c_{m}(T_{m}-T^{n+1})+\\ \alpha\pi(f(T_{m})-f(T^{n+1})),w>- 4\pi k_{0}\tau^{-2}\sigma\left(f(T^{n+1}),w\right)_{0}+k_{0}\tau^{-2}(\gamma_{2}-\gamma_{1})^{-1}\sigma*\\
     \left(\gamma_{2}\phi_{1}^{n+1}-\gamma_{1}\phi_{2}^{n+1},w\right)_{0}-\frac{\sigma^{3}}{12}\left(\partial_{ttt}T(\overline{\epsilon}^{n+1}),w\right)_{0},
      \text{\,\,\,\,}\forall w\in W_{2}^{1}(\Omega),\\
     \text{\,}\\
    \tau^{2}\overline{\mu}^{.2}.*A(\phi^{n+1},\psi)+k_{0}\left(\phi^{n+1},\psi\right)_{\Delta}=-\frac{\tau}{3}\overline{\mu}^{.2}.*(<G_{\overline{\alpha}\overline{\beta}}(\phi^{n+1}),\psi>_{\Delta}
    -\overline{\eta}.*<F(T_{m}),\psi>_{\Delta})+\\
     4\pi k_{0}\left(F(T^{n+1}),\psi\right)_{\Delta},\text{\,\,\,\,}\forall \psi=(\psi_{1},\psi_{2})^{t}\in [W_{2}^{1}(\Omega)]^{2}.\\
    \end{array}
   \end{equation}

    Truncating the error term $-\frac{\sigma^{3}}{12}\left(\partial_{ttt}T(\overline{\epsilon}^{n+1}),w\right)_{0}$, into the first equation in system $(\ref{20})$ and replacing the exact solution $(T,\phi)\in H^{4}(0,T_{f};\text{\,}W_{2}^{2}(\Omega))\times L^{2}(0,T_{f};\text{\,}[W_{2}^{2}(\Omega)]^{2})$, with the computed one $(T_{h}(t),\phi_{h}(t))\in\mathcal{U}_{h}\times\mathcal{V}_{h}$, for every $t\in[0,\text{\,}T_{f}]$, to obtain the second step of the developed numerical approach, that is, for $n=0,1,...,N-1$,
      \begin{equation}\label{21}
    \begin{array}{c}
    3\left(T_{h}^{n+1},w\right)_{0}+\sigma a(T_{h}^{n+1},w)=\left(4T_{h}^{n+\frac{1}{2}}-T_{h}^{n},w\right)_{0}
     +\sigma\tau^{-1}<c_{m}(T_{m}-T_{h}^{n+1})+\\ \alpha\pi(f(T_{m})-f(T_{h}^{n+1})),w>- 4\pi k_{0}\tau^{-2}\sigma\left(f(T_{h}^{n+1}),w\right)_{0}+k_{0}\tau^{-2}(\gamma_{2}-\gamma_{1})^{-1}\sigma*\\
     \left(\gamma_{2}\phi_{1_{h}}^{n+1}-\gamma_{1}\phi_{2_{h}}^{n+1},w\right)_{0},\text{\,\,\,\,}\forall w\in W_{2}^{1}(\Omega),\\
     \text{\,}\\
    \tau^{2}\overline{\mu}^{.2}.*A(\phi_{h}^{n+1},\psi)+k_{0}\left(\phi_{h}^{n+1},\psi\right)_{\Delta}=-\frac{\tau}{3}\overline{\mu}^{.2}.*(<G_{\overline{\alpha}\overline{\beta}}(\phi_{h}^{n+1}),\psi>_{\Delta}
    -\overline{\eta}.*<F(T_{m}),\psi>_{\Delta})+\\
     4\pi k_{0}\left(F(T_{h}^{n+1}),\psi\right)_{\Delta},\text{\,\,\,\,}\forall \psi=(\psi_{1},\psi_{2})^{t}\in [W_{2}^{1}(\Omega)]^{2}.\\
    \end{array}
   \end{equation}

   Let $Q_{h}$ be the $L^{2}$-projection operator from $L^{2}(\Omega)$ onto $\mathcal{U}_{h}$. So, $Q_{h}$ satisfies the following property
         \begin{equation*}
     \left(Q_{h}u_{1},u_{2}\right)_{0}=\left(u_{1},u_{2}\right)_{0},\text{\,\,\,\,}\forall u_{1},u_{2}\in L^{2}(\Omega).
         \end{equation*}

    To start the new algorithm, the first two terms $T_{h}^{0}$ and $T_{h}^{-\frac{1}{2}}$ are needed. However, the first term $T_{h}^{0}$ satisfies
     \begin{equation}\label{22}
     T_{h}^{0}=Q_{h}T_{0},
     \end{equation}
      where $T_{0}$ is the initial condition given by equation $(\ref{5})$, while the second term $T_{h}^{-\frac{1}{2}}$, should be approximated using the first equation in system $(\ref{4})$. In fact, applying the Taylor series expansion to obtain
      \begin{equation}\label{23}
     T^{-\frac{1}{2}}\approx T_{0}-\frac{\sigma}{2}\partial_{t}T(t_{0})=T_{0}-\frac{\sigma}{2}[\nabla\cdot(M^{0}\nabla T_{0})-4\pi k_{0}\tau^{-2}f(T_{0})+k_{0}\tau^{-2}(\gamma_{2}-\gamma_{1})^{-1}(\gamma_{2}\phi_{1}^{0}-\gamma_{1}\phi_{2}^{0})],
     \end{equation}
    where the error term in approximation $(\ref{23})$, is $\frac{\sigma^{2}}{8}\partial_{tt}T(\epsilon_{0})$, with $0<\epsilon_{0}<\frac{\sigma}{2}$. The terms $\phi_{1}^{0}$ and $\phi_{2}^{0}$ satisfy
     \begin{equation}\label{24}
    \begin{array}{c}
    -\tau^{2}\mu_{1}^{2}\nabla\cdot(L\nabla\phi_{1}^{0})+k_{0}\phi_{1}^{0}=4\pi k_{0}f(T_{0}) \text{\,\,\,and\,\,\,}
    -\tau^{2}\mu_{2}^{2}\nabla\cdot(L\nabla\phi_{2}^{0})+k_{0}\phi_{2}^{0}=4\pi k_{0}f(T_{0}),\text{\,\,\,\,on\,\,\,\,}\Omega.\\
    \end{array}
   \end{equation}

   Thus, take
   \begin{equation}\label{25}
     T_{h}^{-\frac{1}{2}}=Q_{h}T^{-\frac{1}{2}}.
     \end{equation}

    The weak formulation corresponding to equation $(\ref{24})$ is given by
     \begin{equation*}
    \tau^{2}\overline{\mu}^{.2}.*A(\phi^{0},\psi)+k_{0}\left(\phi^{0},\psi\right)_{\Delta}+\frac{\tau}{3}\overline{\mu}^{.2}.*<G_{\overline{\alpha}\overline{\beta}}(\phi^{0}),\psi>_{\Delta}
    =\frac{\tau}{3}\overline{\mu}^{.2}.*\overline{\eta}.*<F(T_{m}),\psi>_{\Delta}+
     \end{equation*}
     \begin{equation}\label{26}
     4\pi k_{0}\left(F(T_{0}),\psi\right)_{\Delta},\text{\,\,\,\,}\forall \psi=(\psi_{1},\psi_{2})^{t}\in [W_{2}^{1}(\Omega)]^{2}.
   \end{equation}

   Plugging the systems of equations $(\ref{18})$, $(\ref{21})$ and equations $(\ref{22})$, $(\ref{25})$ to get the desired two-stage explicit/implicit method combined with the mixed finite element technique ($\mathcal{P}_{p}/\mathcal{P}_{p-1}$), for solving the initial-boundary value problem $(\ref{4})$-$(\ref{6})$, that is, given $(T_{h}^{n},\phi_{h}^{n})\in\mathcal{U}_{h}\times\mathcal{V}_{h}$, find $(T_{h}^{n+1},\phi_{h}^{n+1})\in\mathcal{U}_{h}\times\mathcal{V}_{h}$, for $n=0,1,...N-1$, so that
     \begin{equation}\label{s1}
    \begin{array}{c}
    \left(T_{h}^{n+\frac{1}{2}},w\right)_{0}+\sigma a(T_{h}^{n},w)=\left(T_{h}^{n-\frac{1}{2}},w\right)_{0}
     +\sigma\tau^{-1}<c_{m}(T_{m}-T_{h}^{n})+\alpha\pi(f(T_{m})-f(T_{h}^{n})),w>\\ -4\pi k_{0}\tau^{-2}\sigma\left(f(T_{h}^{n}),w\right)_{0}+ k_{0}\tau^{-2}(\gamma_{2}-\gamma_{1})^{-1}\sigma\left(\gamma_{2}\phi_{1h}^{n}-\gamma_{1}\phi_{2h}^{n},w\right)_{0},\text{\,\,\,\,}\forall w\in W_{2}^{1}(\Omega),\\
     \text{\,}\\
    \tau^{2}\overline{\mu}^{.2}.*A(\phi_{h}^{n},\psi)+k_{0}\left(\phi_{h}^{n},\psi\right)_{\Delta}+\frac{\tau}{3}\overline{\mu}^{.2}.*<G_{\overline{\alpha}\overline{\beta}}(\phi_{h}^{n}),\psi>_{\Delta}=
\frac{\tau}{3}\overline{\mu}^{.2}.*\overline{\eta}.*<F(T_{m}),\psi>_{\Delta}+\\
     4\pi k_{0}\left(F(T_{h}^{n}),\psi\right)_{\Delta},\text{\,\,\,\,}\forall \psi=(\psi_{1},\psi_{2})^{t}\in [W_{2}^{1}(\Omega)]^{2}.
    \end{array}
   \end{equation}
     \begin{equation}\label{s2}
    \begin{array}{c}
    3\left(T_{h}^{n+1},w\right)_{0}+\sigma a(T_{h}^{n+1},w)=\left(4T_{h}^{n+\frac{1}{2}}-T_{h}^{n},w\right)_{0}
     +\sigma\tau^{-1}<c_{m}(T_{m}-T_{h}^{n+1})+\\ \alpha\pi(f(T_{m})-f(T_{h}^{n+1})),w>- 4\pi k_{0}\tau^{-2}\sigma\left(f(T_{h}^{n+1}),w\right)_{0}+k_{0}\tau^{-2}(\gamma_{2}-\gamma_{1})^{-1}\sigma*\\
     \left(\gamma_{2}\phi_{1_{h}}^{n+1}-\gamma_{1}\phi_{2_{h}}^{n+1},w\right)_{0},\text{\,\,\,\,}\forall w\in W_{2}^{1}(\Omega),\\
     \text{\,}\\
    \tau^{2}\overline{\mu}^{.2}.*A(\phi_{h}^{n+1},\psi)+k_{0}\left(\phi_{h}^{n+1},\psi\right)_{\Delta}+\frac{\tau}{3}\overline{\mu}^{.2}.*<G_{\overline{\alpha}\overline{\beta}}
    (\phi_{h}^{n+1}),\psi>_{\Delta}=\frac{\tau}{3}\overline{\mu}^{.2}.*\overline{\eta}.*<F(T_{m}),\psi>_{\Delta}+\\
     4\pi k_{0}\left(F(T_{h}^{n+1}),\psi\right)_{\Delta},\text{\,\,\,\,}\forall \psi=(\psi_{1},\psi_{2})^{t}\in [W_{2}^{1}(\Omega)]^{2}.\\
    \end{array}
   \end{equation}
    with initial condition
    \begin{equation}\label{s3}
     T_{h}^{0}=Q_{h}T_{0},\text{\,\,\,\,\,\,}T_{h}^{-\frac{1}{2}}=Q_{h}T^{-\frac{1}{2}},
     \end{equation}
    where the term $T^{-\frac{1}{2}}$, is given by approximation $(\ref{23})$.

       It's worth recalling that the first stage of the new algorithm given by equation $(\ref{s1})$ deals with an explicit numerical approach whereas the second stage defined by equation $(\ref{s2})$ works with an implicit numerical method. Moreover, the developed computational technique $(\ref{s1})$-$(\ref{s3})$ should be observed as a predictor-corrector method. As a result, the errors increased at the predictor step are balanced by the ones decreased at the corrector step, so that the stability of the proposed algorithm $(\ref{s1})$-$(\ref{s3})$ is preserved.

     \section{Stability analysis of the developed computational approach}\label{sec4}

     \text{\,\,\,\,\,\,\,\,\,\,}This section deals with the analysis of stability of the proposed numerical scheme $(\ref{s1})$-$(\ref{s3})$, for solving the $SP_{3}$ model $(\ref{4})$-$(\ref{6})$ corresponding to the nonlinear radiative heat transfer problem $(\ref{1})$-$(\ref{3})$. The second stage given by equation $(\ref{s2})$ is an implicit scheme, hence it should be unconditionally stable in the $L^{2}(\Omega)$-norm thanks to the properties of numerical implicit methods. As a result, the stability analysis of the constructed strategy $(\ref{s1})$-$(\ref{s3})$, will be restricted to the first stage of the new algorithm. Because the formulas can become quite heavy and for the convenience of writing, only a necessary condition on the time step restriction which ensures the stability of the first stage of the numerical method will be considered. However, the analysis of the stability under a suitable time step limitation is too difficult because of the nonlinear behavior of the $SP_{3}$ problem. Finally, a combination of first equations in systems $(\ref{17})$, $(\ref{20})$ and approximation $(\ref{23})$ indicates that the new algorithm $(\ref{s1})$-$(\ref{s3})$ is temporal second-order accurate whenever it is stable. This result will be confirmed in the numerical experiments section.\\

    We introduce the following norms: $\|\cdot\|_{0}$, $\|\cdot\|_{\bar{0}}$, $\|\cdot\|_{>}$, associated with the scalar products $\left(\cdot,\cdot\right)_{0}$, $\left(\cdot,\cdot\right)_{\bar{0}}$, and $<\cdot,\cdot>$, respectively. These norms are defined as
     \begin{equation}\label{27}
      \|u\|_{0}=\sqrt{\left(u,u\right)_{0}},\text{\,\,\,}\|u\|_{>}=\sqrt{<u,u>},\text{\,\,\,}\forall u\in L^{2}(\Omega), \text{\,\,\,}\|w\|_{\bar{0}}=\sqrt{\left(w,w\right)_{\bar{0}}},\text{\,\,\,}\forall w=(w_{1},w_{2},w_{3})^{t}\in[L^{2}(\Omega)]^{3}.
      \end{equation}

    We assume that the sequences $\{\mathcal{U}_{h}\}_{h>0}$ and $\{\mathcal{V}_{h}\}_{h>0}$, of mixed finite element subspaces are used in the fluid regions. The analysis of stability of the developed algorithm requires the following Poincar\'{e}-Friedrich and trace inequalities $(\ref{30})$ and $(\ref{31})$, respectively.
      \begin{equation}\label{30}
      \|w\|_{0}\leq \widehat{C}_{1}\|\nabla w\|_{\bar{0}},\text{\,\,\,\,}w\in W_{2}^{1}(\Omega),
      \end{equation}
       \begin{equation}\label{31}
      \|w\|_{>}\leq \sqrt{\widehat{C}_{1}}\|w\|_{0}^{\frac{1}{2}}\|\nabla w\|_{\bar{0}}^{\frac{1}{2}},\text{\,\,\,\,}w\in W_{2}^{1}(\Omega),
      \end{equation}
      where $\widehat{C}_{1}$ is a positive constant that does not depend on the time step $\sigma$ and grid space $h$.\\

      It's worth mentioning that the considered $SP_{3}$ problem $(\ref{4})$-$(\ref{6})$ is too complex, as a result, find an appropriate condition on time step for stability of the proposed numerical approach $(\ref{s1})$-$(\ref{s3})$ should be too difficult. Because $f(T_{h}^{n})=c_{bs}(T_{h}^{n})^{4}$, where $c_{bs}=5.67\times10^{-8}$ and the term $T_{h}^{n}$ is computed at the second stage which is an unconditionally stable scheme, the sequence of approximate spectral intensity of the black-body $\{\|f(T_{h}^{n})\|_{>}\}_{n\leq N}$, cannot not grow faster than the sequence $\{\sqrt{N}\}_{N\geq1}$. This leads to the following necessary restriction for stability of the first stage of the proposed approach
     \begin{equation}\label{ts}
      \underset{\sigma\rightarrow0}{\lim}\sqrt{\sigma}\|f(T_{h}^{n})\|_{>}=0,
      \end{equation}
      for $n=0,1,...,N$, where $\sigma=\frac{T_{f}}{N}$. Condition $(\ref{ts})$ implies that the proposed method cannot advance the approximate solution with a maximum
      allowable time step. This restriction is highly intriguing since the first and second stages of the new
      computational technique should employ the same time step. This is because the second step, which works with an implicit scheme, requires a sufficiently small time step to
      guarantee both stability and convergence.\\

     \begin{lemma}\label{l1}
     For any $w,\rho\in W_{2}^{1}(\Omega)$, it holds
     \begin{equation}\label{32}
     |<w,\rho>|\leq \epsilon\|w\|_{>}^{2}+\frac{1}{4\epsilon}\|\rho\|_{>}^{2}\text{\,\,\,\,\,and\,\,\,\,\,}|<w,\rho>|\leq \epsilon_{1}\|w\|_{>}^{2}+\frac{1}{4\epsilon_{1}}\widehat{C}_{1}^{2}\|\nabla\rho\|_{\bar{0}}^{2},
     \end{equation}
     for every positive constants $\epsilon$ and $\epsilon_{1}$.
     \end{lemma}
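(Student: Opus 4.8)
The plan is to derive both estimates from the Cauchy--Schwarz inequality associated with the boundary bilinear form $<\cdot,\cdot>$, followed by Young's inequality in the form $ab\leq\epsilon a^{2}+\frac{1}{4\epsilon}b^{2}$, valid for all reals $a,b$ and any $\epsilon>0$ since it is equivalent to $\epsilon(a-\frac{b}{2\epsilon})^{2}\geq0$. Because $<\cdot,\cdot>$ is a genuine inner product on the trace space whose induced norm is $\|\cdot\|_{>}$ by $(\ref{27})$, the Cauchy--Schwarz bound $|<w,\rho>|\leq\|w\|_{>}\,\|\rho\|_{>}$ is immediate and serves as the common starting point for both parts.

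For the first inequality, I would simply set $a=\|w\|_{>}$ and $b=\|\rho\|_{>}$ in Young's inequality applied to this Cauchy--Schwarz bound, which produces $|<w,\rho>|\leq\epsilon\|w\|_{>}^{2}+\frac{1}{4\epsilon}\|\rho\|_{>}^{2}$ at once, with no further ingredients needed.

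For the second inequality, the extra ingredient is to trade the boundary norm $\|\rho\|_{>}$ for the gradient norm $\|\nabla\rho\|_{\bar{0}}$. First I would invoke the trace inequality $(\ref{31})$ to obtain $\|\rho\|_{>}^{2}\leq\widehat{C}_{1}\|\rho\|_{0}\,\|\nabla\rho\|_{\bar{0}}$, and then the Poincar\'{e}--Friedrich inequality $(\ref{30})$, namely $\|\rho\|_{0}\leq\widehat{C}_{1}\|\nabla\rho\|_{\bar{0}}$, to absorb the $L^{2}$ factor, yielding $\|\rho\|_{>}^{2}\leq\widehat{C}_{1}^{2}\|\nabla\rho\|_{\bar{0}}^{2}$, that is, $\|\rho\|_{>}\leq\widehat{C}_{1}\|\nabla\rho\|_{\bar{0}}$. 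Inserting this into the Cauchy--Schwarz bound and then applying Young's inequality with $a=\|w\|_{>}$ and $b=\widehat{C}_{1}\|\nabla\rho\|_{\bar{0}}$ gives exactly $|<w,\rho>|\leq\epsilon_{1}\|w\|_{>}^{2}+\frac{1}{4\epsilon_{1}}\widehat{C}_{1}^{2}\|\nabla\rho\|_{\bar{0}}^{2}$.

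There is no substantive obstacle here: the argument is a routine pairing of Cauchy--Schwarz with Young's inequality. The only points requiring care are the correct chaining of $(\ref{30})$ and $(\ref{31})$ so that the constant emerges as $\widehat{C}_{1}^{2}$ rather than $\widehat{C}_{1}$, and the bookkeeping of the factor $\frac{1}{4}$ in Young's inequality so that the coefficients match the stated right-hand sides precisely.
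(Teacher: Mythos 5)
Your proposal is correct and follows essentially the same route as the paper: Cauchy--Schwarz on the boundary integral plus Young's inequality for the first bound, and the chaining of the trace inequality $(\ref{31})$ with the Poincar\'{e}--Friedrich inequality $(\ref{30})$ to replace $\|\rho\|_{>}^{2}$ by $\widehat{C}_{1}^{2}\|\nabla\rho\|_{\bar{0}}^{2}$ for the second. The only (immaterial) difference is ordering: the paper applies Young's inequality first and then bounds $\|\rho\|_{>}^{2}$, whereas you bound $\|\rho\|_{>}$ first and then apply Young's inequality; both yield the identical constants.
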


   \begin{proof}
   It follows from the definition of the scalar product $<\cdot,\cdot>$ given in relation $(\ref{11})$ together with the Poincar\'{e}-Friedrich and trace inequalities $(\ref{30})$ and $(\ref{31})$, respectively, and the Cauchy-Schwarz inequality that
    \begin{equation*}
    |<w,\rho>|\leq |\int_{\Gamma}w\rho d\Gamma|\leq \left(\int_{\Gamma}w^{2}d\Gamma\right)^{\frac{1}{2}}\left(\int_{\Gamma}\rho^{2}d\Gamma\right)^{\frac{1}{2}}\leq \epsilon\|w\|_{>}\epsilon\|w\|_{>}\leq \epsilon\|w\|_{>}^{2}+\frac{1}{4\epsilon}\|\rho\|_{>}^{2}.
   \end{equation*}

   \begin{equation*}
    |<w,\rho>|\leq \epsilon_{1}\|w\|_{>}^{2}+\frac{1}{4\epsilon_{1}}\|\rho\|_{>}^{2}\leq \epsilon_{1}\|w\|_{>}^{2}+\frac{1}{4\epsilon_{1}}\widehat{C}_{1}\|\rho\|_{0}
    \|\nabla\rho\|_{\bar{0}}\leq \epsilon_{1}\|w\|_{>}^{2}+\frac{1}{4\epsilon_{1}}\widehat{C}_{1}^{2}\|\nabla\rho\|_{\bar{0}}^{2}.
   \end{equation*}
   \end{proof}

     \begin{theorem} \label{t1} (Stability analysis).
      Let $(T_{h}^{n+\frac{1}{2}},\phi_{h}^{n})\in\mathcal{U}_{h}\times\mathcal{V}_{h}$, be the approximate solution provided by the first stage of the developed numerical
       technique $(\ref{s1})$-$(\ref{s3})$. Under the time step requirement $(\ref{ts})$, the following estimates hold
        \begin{equation*}
   \underset{0\leq n\leq N-1}{\max}\|T_{h}^{n+\frac{1}{2}}\|_{0}^{2} \leq \|T^{-\frac{1}{2}}\|_{0}^{2}+ \frac{T_{f}}{2}+
    \frac{T_{f}}{\tau}\left(\frac{3(\alpha\pi)^{2}}{2 c_{m}}+\frac{(\widehat{C}_{1}k_{0})^{2}(\gamma_{1}^{2}+\gamma_{2}^{2})}{3\tau^{2}|\gamma_{2}-\gamma_{1}|^{2}c_{0}}
     \min\left\{\underset{>0}{\underbrace{k_{0}-\frac{\tau\beta_{1}^{2}\mu_{2}^{2}}{6\alpha_{2}}}},\text{\,\,}\underset{>0}{\underbrace{k_{0}-\frac{\tau\beta_{2}^{2}
       \mu_{1}^{2}}{6\alpha_{1}}}}\right\}^{-1}*\right.
      \end{equation*}
      \begin{equation}\label{sc}
      \left.\underset{j=1}{\overset{2}\sum}\frac{\eta_{j}^{2}\mu_{j}^{2}}{\alpha_{j}}\right)\|f(T_{m})\|_{>}^{2}+
       \frac{8T_{f}\widehat{C}_{1}^{2}(k_{0}\pi)^{2}}{\tau^{4}c_{0}}\left(2+\frac{\widehat{C}_{1}^{2}k_{0}^{2}(\gamma_{1}^{2}+\gamma_{2}^{2})}
      {\tau^{2}|\gamma_{2}-\gamma_{1}|^{2}l_{0}}\min\left\{k_{0}-\frac{\tau\beta_{1}^{2}\mu_{2}^{2}}{6\alpha_{2}}, k_{0}-\frac{\tau\beta_{2}^{2}
\mu_{1}^{2}}{6\alpha_{1}}\right\}^{-1}\underset{j=1}{\overset{2}\sum}\frac{1}{\mu_{j}^{2}}\right)\|f(T_{h}^{n})\|_{0}^{2},
      \end{equation}
      \begin{equation}\label{sc1}
       \underset{0\leq n\leq N}{\max}\|\phi_{lh}^{n}\|_{0}^{2}\leq \min\left\{\underset{>0}{\underbrace{k_{0}-\frac{\tau\beta_{1}^{2}\mu_{2}^{2}}{6\alpha_{2}}}},
       \text{\,\,}\underset{>0}{\underbrace{k_{0}-\frac{\tau\beta_{2}^{2}\mu_{1}^{2}}{6\alpha_{1}}}}\right\}^{-1}
       \left[\frac{\tau}{6}\underset{j=1}{\overset{2}\sum}\frac{\eta_{j}^{2}\mu_{j}^{2}}{\alpha_{j}}\|f(T_{m})\|_{>}^{2}+\frac{4\pi^{2}k_{0}^{2}\widehat{C}_{1}^{2}}{\tau^{2}l_{0}}
    \underset{j=1}{\overset{2}\sum}\frac{1}{\mu_{j}^{2}}\|f(T_{h}^{n})\|_{0}^{2}\right],
     \end{equation}
     for $l=1,2,$ where $T^{-\frac{1}{2}}$ is given by approximation $(\ref{23})$ and $l_{0}$, $c_{0}>0$, are constants independent of the space step $h$ and time step
     $\sigma$, and $k_{0}>\frac{\tau}{6}\max\{\frac{\beta_{1}^{2}\mu_{2}^{2}}{\alpha_{2}},\text{\,} \frac{\beta_{2}^{2}\mu_{1}^{2}}{\alpha_{1}}\}$ (see Section $\ref{sec2}$, Page 4). It's important to recall that the term $T_{h}^{n}$ is obtained at the second stage given by equations $(\ref{s2})$, which work with implicit methods. Hence,
     $\|T_{h}^{n}\|_{0}$ is bounded, for $n=0,1,2,...,N$, with a positive constant independent of the mesh grid $h$ and time step $\sigma$.
    \end{theorem}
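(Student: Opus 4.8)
The plan is to prove the two estimates in the order $\pref{sc1}$ then $\pref{sc}$, since the control of the temperature in $\pref{sc}$ requires the bound on the radiative unknowns $\phi_{lh}^n$. To obtain $\pref{sc1}$ I would test the radiative (second) line of the first stage $\pref{s1}$ with $\psi=\phi_h^n=(\phi_{1h}^n,\phi_{2h}^n)^t$ and split the resulting vector identity into its two scalar components. On the left the contribution $\tau^2\overline{\mu}^{.2}.*A(\phi_h^n,\phi_h^n)$ is nonnegative because the tensor $L$ in $\pref{7a}$ is symmetric positive definite, so that $(L\nabla\phi_{jh}^n,\nabla\phi_{jh}^n)_{\bar0}\geq l_0\|\nabla\phi_{jh}^n\|_{\bar0}^2$, while $k_0(\phi_h^n,\phi_h^n)_\Delta$ yields the coercive terms $k_0\|\phi_{jh}^n\|_0^2$. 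The only dangerous term is the boundary coupling $\frac{\tau}{3}\overline{\mu}^{.2}.*<G_{\overline{\alpha}\overline{\beta}}(\phi_h^n),\phi_h^n>_\Delta$: its diagonal part $\frac{\tau\mu_j^2\alpha_j}{3}\|\phi_{jh}^n\|_>^2$ has a good sign, whereas the off-diagonal parts built from $\beta_1,\beta_2$ couple $\phi_{1h}^n$ and $\phi_{2h}^n$ and must be absorbed. I would estimate each off-diagonal term by the weighted Young inequality of Lemma~\ref{l1}, consuming part of the diagonal boundary terms and, through the trace inequality $\pref{31}$, trading the residual boundary norms for $\|\nabla\phi_{jh}^n\|_{\bar0}^2$ (absorbed by the $L$-term via $l_0$) plus multiples of $\|\phi_{jh}^n\|_0^2$. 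The standing hypothesis $k_0>\frac{\tau}{6}\max\{\beta_1^2\mu_2^2/\alpha_2,\ \beta_2^2\mu_1^2/\alpha_1\}$ is exactly what keeps the net coefficient of $\|\phi_{lh}^n\|_0^2$ equal to the strictly positive quantity $\min\{k_0-\tau\beta_1^2\mu_2^2/(6\alpha_2),\,k_0-\tau\beta_2^2\mu_1^2/(6\alpha_1)\}$. Bounding the data $\frac{\tau}{3}\overline{\mu}^{.2}.*\overline{\eta}.*<F(T_m),\phi_h^n>_\Delta$ and $4\pi k_0(F(T_h^n),\phi_h^n)_\Delta$ by Cauchy--Schwarz together with $\pref{30}$--$\pref{31}$ then divides out this coefficient and delivers $\pref{sc1}$.

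For $\pref{sc}$ I would test the temperature (first) line of $\pref{s1}$ with $w=T_h^{n+\frac12}+T_h^{n-\frac12}$, so that the two $L^2$ products telescope into $\|T_h^{n+\frac12}\|_0^2-\|T_h^{n-\frac12}\|_0^2$. The boundary terms $\sigma\tau^{-1}<c_m(T_m-T_h^n)+\alpha\pi(f(T_m)-f(T_h^n)),w>$ would be controlled with Lemma~\ref{l1} and $\pref{31}$, the Robin coefficient $c_m$ producing the factor $\frac{3(\alpha\pi)^2}{2c_m}$ in $\pref{sc}$; the volume radiation term $4\pi k_0\tau^{-2}\sigma(f(T_h^n),w)_0$ and the coupling $k_0\tau^{-2}(\gamma_2-\gamma_1)^{-1}\sigma(\gamma_2\phi_{1h}^n-\gamma_1\phi_{2h}^n,w)_0$ would be handled by Cauchy--Schwarz and Young, after which the already-established bound $\pref{sc1}$ is substituted for $\|\phi_{lh}^n\|_0$; this is what produces the nested $\min\{\cdots\}^{-1}$ structure and the constants $l_0,c_0$ appearing in $\pref{sc}$. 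Summing the one-step inequality over $n=0,1,\dots,N-1$ telescopes the left-hand side to $\max_n\|T_h^{n+\frac12}\|_0^2-\|T_h^{-\frac12}\|_0^2$, and each $\sigma$-proportional constant accumulates to a $T_f$-proportional one through $N\sigma=T_f$, which accounts for the terms $\frac{T_f}{2}$, $\frac{T_f}{\tau}(\cdots)$ and $\frac{8T_f\widehat{C}_1^2(k_0\pi)^2}{\tau^4c_0}(\cdots)$.

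The main obstacle is the explicit treatment of diffusion: with $w=T_h^{n+\frac12}+T_h^{n-\frac12}$ the stiffness term $\sigma a(T_h^n,w)$ is evaluated at the intermediate level $t_n$, which is \emph{not} one of the tested levels $t_{n\pm\frac12}$, so it does not close into a coercive, telescoping energy term and cannot be dominated by $L^2$-norms alone. I would control it by Cauchy--Schwarz in the bilinear form $a(\cdot,\cdot)$ together with the boundedness of $T_h^n$ (hence of $a(T_h^n,T_h^n)$) coming from the unconditionally stable implicit corrector $\pref{s2}$, as the statement already records that $\|T_h^n\|_0$ is bounded independently of $h$ and $\sigma$. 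The genuinely delicate point, and the reason the estimate is only \emph{conditional}, is the nonlinear boundary radiation $f(T_h^n)=c_{bs}(T_h^n)^4$: the accumulated contributions $\sigma\|f(T_h^n)\|_>^2$ over the $N\sim T_f/\sigma$ predictor steps are kept from blowing up precisely by the time-step restriction $\pref{ts}$, i.e. by the requirement that $\sqrt{\sigma}\|f(T_h^n)\|_>\to0$ (no faster than $\sqrt{N}$ growth). I expect that verifying these explicit and nonlinear terms really remain summable under $\pref{ts}$, rather than the routine Young and trace bookkeeping, is where the proof must do its real work.
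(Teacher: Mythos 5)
Your treatment of the radiative bound \pref{sc1} coincides with the paper's: the paper also tests the second line of \pref{s1} with $\psi=\phi_h^n$, uses the coercivity constants $c_0,l_0$, absorbs the off-diagonal boundary couplings with Lemma \ref{l1} and the trace inequality, and invokes the hypothesis $k_{0}>\frac{\tau}{6}\max\{\frac{\beta_{1}^{2}\mu_{2}^{2}}{\alpha_{2}},\frac{\beta_{2}^{2}\mu_{1}^{2}}{\alpha_{1}}\}$ to produce the $\min\{\cdot\}^{-1}$ factor (this is estimate \pref{34} in the paper). Your ordering (radiative bound first, then substituted into the temperature estimate) is also the paper's.

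Your treatment of the temperature bound \pref{sc}, however, contains a genuine gap, and it is located exactly where you place the ``main obstacle.'' The paper tests the first line of \pref{s1} with $w=T_h^{n}$, not with $T_h^{n+\frac12}+T_h^{n-\frac12}$, and this choice is not cosmetic: it is what makes the two problematic terms harmless. First, with $w=T_h^n$ the diffusion term becomes $\sigma a(T_h^n,T_h^n)=\sigma\left(M_h^n\nabla T_h^n,\nabla T_h^n\right)_{\bar 0}\geq \sigma c_0\widehat{C}_1^{-2}\|T_h^n\|_0^2\geq 0$ (estimate \pref{36}), a coercive quantity that the paper then \emph{uses} to absorb the two $\frac{\sigma c_0}{2\widehat{C}_1^{2}}\|T_h^n\|_0^2$ contributions generated by Young's inequality in \pref{38} and \pref{39}. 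Second, with $w=T_h^n$ the Robin term produces the signed quantity $-\sigma\tau^{-1}c_m\|T_h^n\|_>^2$, which with the specific choice $\epsilon=\frac13\tau^{-1}c_m$ exactly cancels the three $\epsilon$-terms from Lemma \ref{l1} and yields the factor $\frac{3(\alpha\pi)^2}{2c_m}$ of \pref{37}. The price of this test function is that $\left(T_h^{n+\frac12}-T_h^{n-\frac12},T_h^n\right)_0$ does not telescope exactly; the paper repairs this with the polarization identity \pref{35} plus Taylor expansions of $T_h^{n\pm\frac12}$ about $t_n$, reducing the defect to $\frac{\sigma^2}{4}$ times a difference of $\|\partial_t T_h(\epsilon^{n\pm\frac12})\|_0^2$, bounded by a constant $\nu_0$, before summing.

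Your alternative test function $w=T_h^{n+\frac12}+T_h^{n-\frac12}$ destroys both structures, and your proposed repair does not work. You claim the term $\sigma a(T_h^n,w)$ can be controlled ``by Cauchy--Schwarz in $a(\cdot,\cdot)$ together with the boundedness of $T_h^n$ (hence of $a(T_h^n,T_h^n)$)''; but boundedness of $\|T_h^n\|_0$ does \emph{not} imply boundedness of $a(T_h^n,T_h^n)$, which controls $\|\nabla T_h^n\|_{\bar 0}^2$ --- an $L^2$ bound gives no gradient information. On the finite element space one could invoke an inverse inequality, but that introduces a factor $h^{-2}$ and ruins the $h$-independence of the stability constant that the theorem asserts. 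Moreover, even granting a bound on $a(T_h^n,T_h^n)$, Cauchy--Schwarz in the $a$-inner product would require $a(T_h^{n\pm\frac12},T_h^{n\pm\frac12})^{1/2}$, i.e.\ gradient control of the very half-step iterates you are trying to estimate, and only their $L^2$ norms appear on the left of your telescoped inequality --- so the estimate does not close. The same defect appears in the boundary terms: with your $w$ the good term $-c_m\|T_h^n\|_>^2$ is replaced by the signless cross term $-c_m<T_h^n,T_h^{n+\frac12}+T_h^{n-\frac12}>$, and the boundary norms $\|T_h^{n\pm\frac12}\|_>$ produced by Young's inequality have nothing available to absorb them, so the factor $\frac{3(\alpha\pi)^2}{2c_m}$ cannot be recovered along your route. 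The fix is simply to adopt the paper's test function $w=T_h^n$ and accept the inexact telescoping, which is then handled by \pref{35} and \pref{41}.
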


    \begin{proof}
     Replacing in system $(\ref{s1})$, $w$ with $T_{h}^{n}$, $\psi$ with $\phi_{h}^{n}=(\phi_{1h}^{n},\phi_{2h}^{n})^{t}$ and rearranging terms, this gives
     \begin{equation*}
    \begin{array}{c}
    \left(T_{h}^{n+\frac{1}{2}}-T_{h}^{n-\frac{1}{2}},T_{h}^{n}\right)_{0}+\sigma a(T_{h}^{n},T_{h}^{n})=\sigma\tau^{-1}<c_{m}(T_{m}-T_{h}^{n})+\alpha\pi(f(T_{m})- f(T_{h}^{n})),T_{h}^{n}>\\ -4\pi k_{0}\tau^{-2}\sigma\left(f(T_{h}^{n}),T_{h}^{n}\right)_{0}+ k_{0}\tau^{-2}(\gamma_{2}-\gamma_{1})^{-1}\sigma\left(\gamma_{2}\phi_{1h}^{n}-\gamma_{1}\phi_{2h}^{n},T_{h}^{n}\right)_{0},\\
     \text{\,}\\
    \tau^{2}\overline{\mu}^{.2}.*A(\phi_{h}^{n},\phi_{h}^{n})+k_{0}\left(\phi_{h}^{n},\phi_{h}^{n}\right)_{\Delta}+\frac{\tau}{3}\overline{\mu}^{.2}
    .*<G_{\overline{\alpha}\overline{\beta}}(\phi_{h}^{n}),\phi_{h}^{n}>_{\Delta}=\frac{\tau}{3}\overline{\mu}^{.2}.*\overline{\eta}.*<F(T_{m}),\phi_{h}^{n}>_{\Delta}+\\
     4\pi k_{0}\left(F(T_{h}^{n}),\phi_{h}^{n}\right)_{\Delta}.
    \end{array}
   \end{equation*}

   Utilizing the definition of operators $a(\cdot,\cdot)$, $\left(\cdot,\cdot\right)_{\Delta}$, $A(\cdot,\cdot)$, $<\cdot,\cdot>_{\Delta}$ and $G_{\overline{\alpha}\overline{\beta}}(\cdot)$, this system of equations is equivalent to
      \begin{equation}\label{33}
       \begin{array}{c}
         \left(T_{h}^{n+\frac{1}{2}}-T_{h}^{n-\frac{1}{2}},T_{h}^{n}\right)_{0}+\sigma\left(M_{h}^{n}\nabla T_{h}^{n},\nabla T_{h}^{n}\right)_{\bar{0}}= \sigma\tau^{-1}<c_{m}(T_{m}-T_{h}^{n})+\alpha\pi(f(T_{m})- f(T_{h}^{n})),T_{h}^{n}>\\ -4\pi k_{0}\tau^{-2}\sigma\left(f(T_{h}^{n}),T_{h}^{n}\right)_{0}+ k_{0}\tau^{-2}(\gamma_{2}-\gamma_{1})^{-1}\sigma\left(\gamma_{2}\phi_{1h}^{n}-\gamma_{1} \phi_{2h}^{n},T_{h}^{n}\right)_{0},\\
     \text{\,}\\
    \tau^{2}\mu_{1}^{2}\left(L\nabla\phi_{1h}^{n},\nabla\phi_{1h}^{n}\right)_{\bar{0}}+k_{0}\left(\phi_{1h}^{n},\phi_{1h}^{n}\right)_{0}+\frac{\tau}{3}\mu_{1}^{2}
    <\alpha_{1}\phi_{1h}^{n}+\beta_{2}\phi_{2h}^{n},\phi_{1h}^{n}>=\frac{\tau}{3}\mu_{1}^{2}\eta_{1}<f(T_{m}),\phi_{1h}^{n}>+\\
     4\pi k_{0}\left(f(T_{h}^{n}),\phi_{1h}^{n}\right)_{0},\\
    \text{\,}\\
    \tau^{2}\mu_{2}^{2}\left(L\nabla\phi_{2h}^{n},\nabla\phi_{2h}^{n}\right)_{\bar{0}}+k_{0}\left(\phi_{2h}^{n},\phi_{2h}^{n}\right)_{0}+\frac{\tau}{3}\mu_{2}^{2}
    <\alpha_{2}\phi_{2h}^{n}+\beta_{1}\phi_{1h}^{n},\phi_{2h}^{n}>=\frac{\tau}{3}\mu_{2}^{2}\eta_{2}<f(T_{m}),\phi_{2h}^{n}>+\\
     4\pi k_{0}\left(f(T_{h}^{n}),\phi_{2h}^{n}\right)_{0},
    \end{array}
   \end{equation}
    where $M_{h}^{n}=M(T_{h}^{n})$, $M$ and $L$ being the symmetric positive definite matrices given in equation $(\ref{7a})$. Since the functions $m_{ii}(\cdot)$ are continuous and bounded below over $\mathbb{R}$, so utilizing equations $(\ref{7a})$-$(\ref{7aa})$, it holds
     \begin{equation*}
      \left(M_{h}^{n}\nabla T_{h}^{n},\nabla T_{h}^{n}\right)_{\bar{0}}\geq\underset{1\leq i\leq3}{\min}\{m_{ii}(T_{h}^{n})\}\|\nabla T_{h}^{n}\|_{\bar{0}}^{2},\text{\,\,}
      \left(L\nabla \phi_{jh}^{n},\nabla \phi_{jh}^{n}\right)_{\bar{0}}\geq\frac{1}{3}\underset{1\leq i\leq3}{\min}\{(k_{0}+\sigma_{i})^{-1}\} \|\nabla\phi_{jh}^{n}\|_{\bar{0}}^{2}, \text{\,\,for\,\,}j=1,2.
     \end{equation*}

     Let $\underset{1\leq i\leq3}{\min}\{m_{ii}(T_{h}^{n})\}\geq c_{0}>0$ and $l_{0}=\frac{1}{3}\underset{1\leq i\leq3}{\min}\{(k_{0}+\sigma_{i})^{-1}\}>0$, where $c_{0}$ and $l_{0}$ are two positive constants independent of the time step $\sigma$ and mesh grid $h$. Thus,
     \begin{equation}\label{e1}
      \left(M_{h}^{n}\nabla T_{h}^{n},\nabla T_{h}^{n}\right)_{\bar{0}}\geq c_{0}\|\nabla T_{h}^{n}\|_{\bar{0}}^{2},\text{\,\,}
      \left(L\nabla \phi_{jh}^{n},\nabla \phi_{jh}^{n}\right)_{\bar{0}}\geq l_{0}\|\nabla\phi_{jh}^{n}\|_{\bar{0}}^{2}, \text{\,\,for\,\,}j=1,2.
     \end{equation}

     Using the Poincar\'{e}-Friedrich inequality $(\ref{30})$ along with estimates $(\ref{32})$ and $(\ref{e1})$, and performing straightforward computations, the last two equations in system $(\ref{33})$ imply
      \begin{equation*}
       k_{0}\|\phi_{jh}^{n}\|_{0}^{2}\leq \frac{\tau\beta_{l}^{2}\mu_{j}^{2}}{6\alpha_{j}}\|\phi_{lh}^{n}\|_{>}^{2}+\frac{\tau\eta_{j}^{2}\mu_{j}^{2}}{6\alpha_{j}}
\|f(T_{m})\|_{>}^{2}+\frac{4\pi^{2}k_{0}^{2}\widehat{C}_{1}^{2}}{\tau^{2}\mu_{j}^{2}l_{0}}\|f(T_{h}^{n})\|_{0}^{2},
     \end{equation*}
     for $j,l=1,2$, with $j\neq l$. Plugging both estimates provides
     \begin{equation*}
       k_{0}(\|\phi_{1h}^{n}\|_{0}^{2}+\|\phi_{2h}^{n}\|_{0}^{2})\leq\frac{\tau}{6}(\frac{\beta_{2}^{2}\mu_{1}^{2}}{\alpha_{1}}\|\phi_{2h}^{n}\|_{>}^{2}+
    \frac{\beta_{1}^{2}\mu_{2}^{2}}{\alpha_{2}}\|\phi_{1h}^{n}\|_{>}^{2})+\frac{\tau}{6}\underset{j=1}{\overset{2}\sum}\frac{\eta_{j}^{2}\mu_{j}^{2}}{\alpha_{j}}
\|f(T_{m})\|_{>}^{2}+\frac{4\pi^{2}k_{0}^{2}\widehat{C}_{1}^{2}}{\tau^{2}l_{0}}\underset{j=1}{\overset{2}\sum}\frac{1}{\mu_{j}^{2}}\|f(T_{h}^{n})\|_{0}^{2},
     \end{equation*}
     which is equivalent to
    \begin{equation*}
       \left(k_{0}-\frac{\tau\beta_{1}^{2}\mu_{2}^{2}}{6\alpha_{2}}\right)\|\phi_{1h}^{n}\|_{0}^{2}+\left(k_{0}-\frac{\tau\beta_{2}^{2}\mu_{1}^{2}}{6\alpha_{1}}\right)
      \|\phi_{2h}^{n}\|_{0}^{2})\leq\frac{\tau}{6}\underset{j=1}{\overset{2}\sum}\frac{\eta_{j}^{2}\mu_{j}^{2}}{\alpha_{j}}
\|f(T_{m})\|_{>}^{2}+\frac{4\pi^{2}k_{0}^{2}\widehat{C}_{1}^{2}}{\tau^{2}l_{0}}\underset{j=1}{\overset{2}\sum}\frac{1}{\mu_{j}^{2}}\|f(T_{h}^{n})\|_{0}^{2}.
     \end{equation*}

     But it's assumed in Section $\ref{sec2}$, Page 4, that $k_{0}>\frac{\tau}{6}\max\{\frac{\beta_{1}^{2}\mu_{2}^{2}}{\alpha_{2}},\text{\,}
      \frac{\beta_{2}^{2}\mu_{1}^{2}}{\alpha_{1}}\}$. Thus, this inequality implies
      \begin{equation}\label{34}
       \|\phi_{1h}^{n}\|_{0}^{2}+\|\phi_{2h}^{n}\|_{0}^{2}\leq \min\left\{k_{0}-\frac{\tau\beta_{1}^{2}\mu_{2}^{2}}{6\alpha_{2}}, k_{0}-\frac{\tau\beta_{2}^{2}\mu_{1}^{2}}{6\alpha_{1}}\right\}^{-1}\left[\frac{\tau}{6}\underset{j=1}{\overset{2}\sum}\frac{\eta_{j}^{2}\mu_{j}^{2}}{\alpha_{j}}
\|f(T_{m})\|_{>}^{2}+\frac{4\pi^{2}k_{0}^{2}\widehat{C}_{1}^{2}}{\tau^{2}l_{0}}\underset{j=1}{\overset{2}\sum}\frac{1}{\mu_{j}^{2}}\|f(T_{h}^{n})\|_{0}^{2}\right].
     \end{equation}

      Performing direct calculations, it is not difficult to show that
      \begin{equation}\label{35}
      \left(T_{h}^{n+\frac{1}{2}}-T_{h}^{n-\frac{1}{2}},T_{h}^{n}\right)_{0}=\frac{1}{2}(\|T_{h}^{n+\frac{1}{2}}\|_{0}^{2}-\|T_{h}^{n-\frac{1}{2}}\|_{0}^{2})+
      \frac{1}{2}(\|T_{h}^{n}-T_{h}^{n-\frac{1}{2}}\|_{0}^{2}-\|T_{h}^{n+\frac{1}{2}}-T_{h}^{n}\|_{0}^{2}),
     \end{equation}
     \begin{equation}\label{36}
      \sigma\left(M_{h}^{n}\nabla T_{h}^{n},\nabla T_{h}^{n}\right)_{\bar{0}}\geq \sigma c_{0}\|\nabla T_{h}^{n}\|_{\bar{0}}^{2}\geq
      \sigma\widehat{C}_{1}^{-2}c_{0}\| T_{h}^{n}\|_{0}^{2},
     \end{equation}
     \begin{equation*}
      \sigma\tau^{-1}<c_{m}(T_{m}-T_{h}^{n})+\alpha\pi(f(T_{m})-f(T_{h}^{n})),T_{h}^{n}>=-\sigma\tau^{-1}c_{m}\|T_{h}^{n}\|_{>}^{2}+\sigma\tau^{-1}c_{m}<T_{m},T_{h}^{n}>-
     \end{equation*}
      \begin{equation*}
      \sigma\tau^{-1}\alpha\pi<f(T_{h}^{n}),T_{h}^{n}>+\sigma\tau^{-1}\alpha\pi<f(T_{m}),T_{h}^{n}>\leq -\sigma\tau^{-1}c_{m}\|T_{h}^{n}\|_{>}^{2}+\sigma \epsilon\|T_{h}^{n}\|_{>}^{2}+\frac{\tau^{-2}c_{m}^{2}\sigma}{4\epsilon}\|T_{m}\|_{>}^{2}+
     \end{equation*}
     \begin{equation*}
      2\sigma\epsilon\|T_{h}^{n}\|_{>}^{2}+\frac{\sigma(\alpha\pi\tau^{-1})^{2}}{4\epsilon}(\|f(T_{h}^{n})\|_{>}^{2}+\|f(T_{m})\|_{>}^{2}).
     \end{equation*}

      For $\epsilon=\frac{1}{3}\tau^{-1}c_{m}$, using the first inequality in Lemma $\ref{l1}$, this implies
      \begin{equation}\label{37}
      \sigma\tau^{-1}<c_{m}(T_{m}-T_{h}^{n})+\alpha\pi(f(T_{m})-f(T_{h}^{n})),T_{h}^{n}>\leq \frac{3\tau^{-1}c_{m}\sigma}{4}\|T_{m}\|_{>}^{2}+ \frac{3\sigma\tau^{-1}(\alpha\pi)^{2}}{4c_{m}}(\|f(T_{h}^{n})\|_{>}^{2}+\|f(T_{m})\|_{>}^{2}),
     \end{equation}
     \begin{equation}\label{38}
      -4\pi k_{0}\tau^{-2}\sigma\left(f(T_{h}^{n}),T_{h}^{n}\right)_{0}\leq 4\pi k_{0}\tau^{-2}\sigma\|T_{h}^{n}\|_{0}\|f(T_{h}^{n})\|_{0}\leq \frac{\sigma}{2\widehat{C}_{1}^{2}}
     c_{0}\|T_{h}^{n}\|_{0}^{2}+\frac{8\sigma(\widehat{C}_{1}k_{0}\pi\tau^{-2})^{2}}{c_{0}}\|f(T_{h}^{n})\|_{0}^{2},
     \end{equation}
      \begin{equation*}
     k_{0}\tau^{-2}\sigma(\gamma_{2}-\gamma_{1})^{-1}\left(\gamma_{2}\phi_{1h}^{n}-\gamma_{1} \phi_{2h}^{n},T_{h}^{n}\right)_{0}\leq
     k_{0}\tau^{-2}\sigma|\gamma_{2}-\gamma_{1}|^{-1}\|\gamma_{2}\phi_{1h}^{n}-\gamma_{1} \phi_{2h}^{n}\|_{0}\|T_{h}^{n}\|_{0}\leq
     \end{equation*}
     \begin{equation*}
     \frac{\sigma}{2\widehat{C}_{1}^{2}}c_{0}\|T_{h}^{n}\|_{0}^{2}+\frac{\sigma\widehat{C}_{1}^{2}k_{0}^{2}}{2\tau^{4}|\gamma_{2}-\gamma_{1}|^{2}c_{0}}
      \|\gamma_{2}\phi_{1h}^{n}-\gamma_{1} \phi_{2h}^{n}\|_{0}^{2}\leq\frac{\sigma}{2\widehat{C}_{1}^{2}}c_{0}\|T_{h}^{n}\|_{0}^{2}+
     \end{equation*}
       \begin{equation}\label{39}
     \frac{\sigma\widehat{C}_{1}^{2}k_{0}^{2}(\gamma_{1}^{2}+\gamma_{2}^{2})}{\tau^{4}|\gamma_{2}-\gamma_{1}|^{2}c_{0}}(\|\phi_{1h}^{n}\|_{0}^{2}+\|\phi_{2h}^{n}\|_{0}^{2}).
     \end{equation}

     Combining estimates $(\ref{34})$ and $(\ref{39})$ and rearranging terms, it is easy to see that
       \begin{equation*}
     k_{0}\tau^{-2}\sigma(\gamma_{2}-\gamma_{1})^{-1}\left(\gamma_{2}\phi_{1h}^{n}-\gamma_{1} \phi_{2h}^{n},T_{h}^{n}\right)_{0}\leq\frac{\sigma}{2\widehat{C}_{1}^{2}} c_{0}\|T_{h}^{n}\|_{0}^{2}+\frac{\sigma\widehat{C}_{1}^{2}k_{0}^{2}(\gamma_{1}^{2}+\gamma_{2}^{2})}{\tau^{4}|\gamma_{2}-\gamma_{1}|^{2}c_{0}}*
     \end{equation*}
       \begin{equation}\label{40}
     \min\left\{k_{0}-\frac{\tau\beta_{1}^{2}\mu_{2}^{2}}{6\alpha_{2}}, k_{0}-\frac{\tau\beta_{2}^{2}\mu_{1}^{2}}{6\alpha_{1}}\right\}^{-1}
      \left[\frac{\tau}{6}\underset{j=1}{\overset{2}\sum}\frac{\eta_{j}^{2}\mu_{j}^{2}}{\alpha_{j}}\|f(T_{m})\|_{>}^{2}+ \frac{4\pi^{2}k_{0}^{2}\widehat{C}_{1}^{2}}{\tau^{2}l_{0}}\underset{j=1}{\overset{2}\sum}\frac{1}{\mu_{j}^{2}}\|f(T_{h}^{n})\|_{0}^{2}\right].
     \end{equation}

     Furthermore, applying the Taylor series expansion for the function $T_{h}$, with time step $\frac{\sigma}{2}$, this yields
      \begin{equation*}
     T_{h}^{n+\frac{1}{2}}=T_{h}^{n}+\frac{\sigma}{2}\partial_{t}T_{h}(\epsilon^{n+\frac{1}{2}})\text{\,\,and\,\,}T_{h}^{n-\frac{1}{2}}=T_{h}^{n}-\frac{\sigma}{2}
      \partial_{t}T_{h}(\epsilon^{n-\frac{1}{2}}),
     \end{equation*}
     where $\epsilon^{n+\frac{1}{2}}\in(t_{n},t_{n+\frac{1}{2}})$, $\epsilon^{n-\frac{1}{2}}\in(t_{n-\frac{1}{2}},t_{n})$, and $\partial_{t}T_{h}$ is defined in the sense of distribution. This shows that the above approximations are well defined. It follows from
     these approximations that
     \begin{equation*}
     \frac{1}{2}(\|T_{h}^{n}-T_{h}^{n-\frac{1}{2}}\|_{0}^{2}-\|T_{h}^{n+\frac{1}{2}}-T_{h}^{n}\|_{0}^{2})=\frac{\sigma^{2}}{4}(
      \|\partial_{t}T_{h}(\epsilon^{n-\frac{1}{2}})\|_{0}^{2}-\|\partial_{t}T_{h}(\epsilon^{n+\frac{1}{2}})\|_{0}^{2}).
     \end{equation*}

     Substituting this into equation $(\ref{35})$, this gives
     \begin{equation}\label{41}
      \left(T_{h}^{n+\frac{1}{2}}-T_{h}^{n-\frac{1}{2}},T_{h}^{n}\right)_{0}=\frac{1}{2}(\|T_{h}^{n+\frac{1}{2}}\|_{0}^{2}-\|T_{h}^{n-\frac{1}{2}}\|_{0}^{2})+
      \frac{\sigma^{2}}{4}(\|\partial_{t}T_{h}(\epsilon^{n-\frac{1}{2}})\|_{0}^{2}-\|\partial_{t}T_{h}(\epsilon^{n+\frac{1}{2}})\|_{0}^{2}).
     \end{equation}

       A combination of the first equation in system $(\ref{33})$ and estimates $(\ref{36})$-$(\ref{38})$ and $(\ref{40})$-$(\ref{41})$, results in
      \begin{equation*}
      \frac{1}{2}(\|T_{h}^{n+\frac{1}{2}}\|_{0}^{2}-\|T_{h}^{n-\frac{1}{2}}\|_{0}^{2})+\frac{\sigma^{2}}{4}(|\partial_{t}T_{h}(\epsilon^{n-\frac{1}{2}})\|_{0}^{2}-
      \|\partial_{t}T_{h}(\epsilon^{n+\frac{1}{2}})\|_{0}^{2})\leq \frac{3\sigma\tau^{-1}(\alpha\pi)^{2}}{4c_{m}}\|f(T_{h}^{n})\|_{>}^{2}+
      \end{equation*}
      \begin{equation*}
      \frac{\sigma}{2\tau}\left(\frac{3(\alpha\pi)^{2}}{2 c_{m}}+\frac{\widehat{C}_{1}^{2}k_{0}^{2}(\gamma_{1}^{2}+\gamma_{2}^{2})}{3\tau^{2}
       |\gamma_{2}-\gamma_{1}|^{2}c_{0}}\min\left\{k_{0}-\frac{\tau\beta_{1}^{2}\mu_{2}^{2}}{6\alpha_{2}}, k_{0}-\frac{\tau\beta_{2}^{2}\mu_{1}^{2}}{6\alpha_{1}}\right\}^{-1}
      \underset{j=1}{\overset{2}\sum}\frac{\eta_{j}^{2}\mu_{j}^{2}}{\alpha_{j}}\right)\|f(T_{m})\|_{>}^{2}+
      \end{equation*}
       \begin{equation}\label{42}
       \frac{4\sigma\widehat{C}_{1}^{2}(k_{0}\pi)^{2}}{\tau^{4}c_{0}}\left(2+\frac{\widehat{C}_{1}^{2}k_{0}^{2}(\gamma_{1}^{2}+\gamma_{2}^{2})}
      {\tau^{2}|\gamma_{2}-\gamma_{1}|^{2}l_{0}}\underset{j=1}{\overset{2}\sum}\frac{1}{\mu_{j}^{2}}
       \min\left\{k_{0}-\frac{\tau\beta_{1}^{2}\mu_{2}^{2}}{6\alpha_{2}}, k_{0}-\frac{\tau\beta_{2}^{2}\mu_{1}^{2}}{6\alpha_{1}}\right\}^{-1}\right)
      \|f(T_{h}^{n})\|_{0}^{2}.
      \end{equation}

      But, $\|\partial_{t}T_{h}\|_{0}^{2}$ belongs to $L^{1}(0,T_{f})$, because the partial derivative $\partial_{t}T_{h}$ is defined in the sense of distribution. As a result, there is a positive constant $\nu_{0}$ independent of the time step $\sigma$ and space step $h$, so that $\left|\|\partial_{t}T_{h}(\epsilon^{n-\frac{1}{2}})\|_{0}^{2}-
     \|\partial_{t}T_{h}(\epsilon^{n+\frac{1}{2}})\|_{0}^{2})\right|\leq\nu_{0}$. For sufficiently small values of the time step $\sigma$, it follows from the time step restriction $(\ref{ts})$ that $\sigma\|f(T_{h}^{n})\|_{>}^{2}\approx0$. Using this fact, estimate $(\ref{42})$ becomes
       \begin{equation*}
      \|T_{h}^{n+\frac{1}{2}}\|_{0}^{2}-\|T_{h}^{n-\frac{1}{2}}\|_{0}^{2} \leq \frac{\sigma^{2}}{2}\left|\|\partial_{t}T_{h}(\epsilon^{n-\frac{1}{2}})\|_{0}^{2}-
     \|\partial_{t}T_{h}(\epsilon^{n+\frac{1}{2}})\|_{0}^{2})\right|+
      \end{equation*}
      \begin{equation*}
      \frac{\sigma}{\tau}\left(\frac{3(\alpha\pi)^{2}}{2 c_{m}}+\frac{\widehat{C}_{1}^{2}k_{0}^{2}(\gamma_{1}^{2}+\gamma_{2}^{2})}{3\tau^{2}
       |\gamma_{2}-\gamma_{1}|^{2}c_{0}}\min\left\{k_{0}-\frac{\tau\beta_{1}^{2}\mu_{2}^{2}}{6\alpha_{2}}, k_{0}-\frac{\tau\beta_{2}^{2}\mu_{1}^{2}}{6\alpha_{1}}\right\}^{-1}
      \underset{j=1}{\overset{2}\sum}\frac{\eta_{j}^{2}\mu_{j}^{2}}{\alpha_{j}}\right)\|f(T_{m})\|_{>}^{2}+
      \end{equation*}
       \begin{equation*}
       \frac{8\sigma\widehat{C}_{1}^{2}(k_{0}\pi)^{2}}{\tau^{4}c_{0}}\left(2+\frac{\widehat{C}_{1}^{2}k_{0}^{2}(\gamma_{1}^{2}+\gamma_{2}^{2})}
      {\tau^{2}|\gamma_{2}-\gamma_{1}|^{2}l_{0}}\underset{j=1}{\overset{2}\sum}\frac{1}{\mu_{j}^{2}}
       \min\left\{k_{0}-\frac{\tau\beta_{1}^{2}\mu_{2}^{2}}{6\alpha_{2}}, k_{0}-\frac{\tau\beta_{2}^{2}\mu_{1}^{2}}{6\alpha_{1}}\right\}^{-1}\right)
      \|f(T_{h}^{n})\|_{0}^{2}.
      \end{equation*}

     Summing up this estimate for $l=0,1,...,n$, and rearranging term, this implies
       \begin{equation*}
      \|T_{h}^{n+\frac{1}{2}}\|_{0}^{2}\leq \frac{\sigma\nu_{0}T_{f}}{2}+\|T_{h}^{-\frac{1}{2}}\|_{0}^{2}+\frac{T_{f}}{\tau}\left(\frac{3(\alpha\pi)^{2}}{2 c_{m}}+\frac{\widehat{C}_{1}^{2}k_{0}^{2}(\gamma_{1}^{2}+\gamma_{2}^{2})}{3\tau^{2}|\gamma_{2}-\gamma_{1}|^{2}c_{0}}*\right.
      \end{equation*}
      \begin{equation*}
      \left.\min\left\{k_{0}-\frac{\tau\beta_{1}^{2}\mu_{2}^{2}}{6\alpha_{2}}, k_{0}-\frac{\tau\beta_{2}^{2}\mu_{1}^{2}}{6\alpha_{1}}\right\}^{-1}
      \underset{j=1}{\overset{2}\sum}\frac{\eta_{j}^{2}\mu_{j}^{2}}{\alpha_{j}}\right)\|f(T_{m})\|_{>}^{2}+
      \end{equation*}
       \begin{equation*}
       \frac{8T_{f}\widehat{C}_{1}^{2}(k_{0}\pi)^{2}}{\tau^{4}c_{0}}\left(2+\frac{\widehat{C}_{1}^{2}k_{0}^{2}(\gamma_{1}^{2}+\gamma_{2}^{2})}
      {\tau^{2}|\gamma_{2}-\gamma_{1}|^{2}l_{0}}\min\left\{k_{0}-\frac{\tau\beta_{1}^{2}\mu_{2}^{2}}{6\alpha_{2}}, k_{0}-\frac{\tau\beta_{2}^{2}
\mu_{1}^{2}}{6\alpha_{1}}\right\}^{-1}\underset{j=1}{\overset{2}\sum}\frac{1}{\mu_{j}^{2}}\right)\|f(T_{h}^{n})\|_{0}^{2},
      \end{equation*}
      for $n=0,1,...,N-1$. Indeed: $(n+1)\sigma\leq T_{f}$. Since $\sigma$ is too small, so $\sigma\nu_{0}\leq1$. Hence, the proof of Theorem $\ref{t1}$ is completed thanks to the initial condition $(\ref{s3})$ and estimate $(\ref{34})$.
      \end{proof}

      \section{Numerical experiments}\label{sec5}

      \text{\,\,\,\,\,\,\,\,\,\,}In this section we use the proposed high-order two-stage explicit/implicit approach combined with the mixed FEM $( \mathcal{P}_{4}/\mathcal{P}_{3})$ to simulate the $SP_{3}$ equations $(\ref{4})$ subjects to initial condition $(\ref{5})$ and boundary condition $(\ref{6})$. Three examples are carried out to confirm the theory and to demonstrate the utility and performance of the new algorithm $(\ref{s1})$-$(\ref{s3})$. To verify the stability and convergence order of the developed computational technique, we take $h=\frac{1}{2^{m}},$ for $m=2,3,4,5$, where $h=\max\{d(K),\text{\,\,}K\in\mathcal{F}_{h}\}$. Additionally, a uniform time step $\sigma=10^{-l}$, for $m=3,4,5,6$, is used. The errors: $e_{T,\cdot}=T_{h}^{n}-T^{n}$ and $e_{\phi,\cdot}=\phi_{h}^{n}-\phi^{n}$, along with the temperature $T_{h}$ and the term $\phi_{h}$ are computed at time $t_{n}$ using the norms $\||\cdot|\|_{0,\infty}$ and $\||\cdot|\|_{\bar{0},\infty}$, respectively, defined as
      \begin{equation*}
       \||w|\|_{0,\infty}=\underset{0\leq n\leq N}{\max}\|w^{n}\|_{0},\text{\,\,\,}\forall w\in L^{\infty}(0,T_{f};\text{\,}L^{2}(\Omega))\text{\,\,\,and\,\,\,}\||\psi|\|_{\bar{0},\infty}=\underset{0\leq n\leq N}{\max}\|\psi^{n}\|_{\bar{0}},\text{\,\,\,}\forall \psi\in[L^{\infty}(0,T_{f};\text{\,}L^{2}(\Omega))]^{2}.
         \end{equation*}

        In addition, the space convergence order, $CO(h)$, of the constructed computational scheme is calculated utilizing the formula
         \begin{equation*}
          CO(h)=\frac{\log\left(\frac{\||w_{2h}-w|\|_{0,\infty}}{\||w_{h}-w|\|_{0,\infty}}\right)}{\log(2)},\text{\,\,}
          \frac{\log\left(\frac{\||\psi_{2h}-\psi|\|_{\bar{0},\infty}}{\||\psi_{h}-\psi|\|_{\bar{0},\infty}}\right)}{\log(2)},
         \end{equation*}
          where, $v_{2h}$ and $v_{h}$ are the spatial errors associated with the grid sizes $2h$ and $h$, respectively, while the temporal convergence order, $CO(\sigma)$, is estimated using the formula
         \begin{equation*}
          CO(\sigma)=\frac{\log\left(\frac{\||w_{10\sigma}-w|\|_{0,\infty}}{\||w_{\sigma}-w|\|_{0,\infty}}\right)}{\log(10)},\text{\,\,}
          \frac{\log\left(\frac{\||\psi_{10\sigma}-\psi|\|_{\bar{0},\infty}}{\||\psi_{\sigma}-\psi|\|_{\bar{0},\infty}}\right)}{\log(10)},
         \end{equation*}
         where $z_{\sigma}$ and $z_{10\sigma}$ represent the temporal errors corresponding to time steps $\sigma$ and $10\sigma$, respectively. The numerical calculations are performed using MATLAB R$2007b$.\\

          $\bullet$ \textbf{Example 1}. We consider the $SP_{3}$ problem $(\ref{4})$ defined on the domain $\overline{\Omega}=[0,\text{\,}1]^{3}$. The final time $T_{f}=0.1$. The parameters are given by: $\alpha=0$, $\tau=1$, $k_{0}=1$, $\sigma_{j}=0$, for $j=1,2,3$. The tensor $M=I_{3}$, where $I_{3}$ designates the identity matrix of size $3$, while the other parameters are provided in \textbf{Table 1}. A solution of the initial-boundary problem $(\ref{4})$-$(\ref{6})$ is constructed by adding extra source functions in the right-hand side of equations $(\ref{4})$, such that the exact solutions are given by
     \begin{equation}\label{exact}
       \left.
         \begin{array}{ll}
           T(x,y,z,t)=\sin(2\pi x)\sin(2\pi y)\sin(2\pi z)e^{t}, & \hbox{} \\
           \phi(x,y,z,t)=(\phi_{1}(x,y,z,t),\phi_{2}(x,y,z,t))^{t}=(\tanh(T+1),\tanh(T-1))^{t}. & \hbox{}
         \end{array}
       \right.
     \end{equation}

     Additionally, the initial temperature and boundary conditions are directly obtained from the analytical solutions defined in equations $(\ref{exact})$.\\

         \textbf{Table 2.} $\label{T2}$ Convergence order $CO(h)$ of the developed high-order two-stage explicit/implicit computational approach combined with mixed FEM ($\mathcal{P}_{4}/\mathcal{P}_{3}$) for solving the $SP_{3}$ model $(\ref{s1})$-$(\ref{s3})$ using various mesh grids $h$ and a time step $\sigma=10^{-4}$.
          \begin{equation*}
          \small{\begin{array}{c c}
          \text{\,developed computational technique,\,\,where\,\,}\sigma=10^{-4}& \\
           \begin{tabular}{cccccccccc}
            \hline
            $h$      & $\||T\|_{0,\infty}$ &$\||T_{h}|\|_{0,\infty}$ & $\||\phi\|_{\bar{0},\infty}$ & $\||\phi_{h}|\|_{\bar{0},\infty}$ & $\||T_{h}-T|\|_{0,\infty}$ & $CO(h)$ & $\||\phi_{h}-\phi|\|_{\bar{0},\infty}$ & $CO(h)$ & CPU(s)\\
             \hline
            $2^{-2}$ & $0.7817$ & $0.7820$ & $2.9932$ & $2.9933$ & $2.3925\times10^{-3}$ & ....   &  $1.8729\times10^{-3}$ & .... & 27.6893\\

            $2^{-3}$ & $0.7818$ & $0.7820$ & $2.9932$ & $2.9933$ & $1.6166\times10^{-4}$ & 3.8875 &  $1.2601\times10^{-4}$ & 3.8937 & 52.5349\\

            $2^{-4}$ & $0.7819$ & $0.7821$ & $2.9933$ & $2.9933$ & $1.0226\times10^{-5}$ & 3.9827 &  $7.8299\times10^{-6}$ & 4.0084 & 111.2006\\

            $2^{-5}$ & $0.7820$ & $0.7821$ & $2.9933$ & $2.9933$ & $6.3753\times10^{-7}$ & 4.0036 &  $4.9274\times10^{-7}$ & 3.9901 & 236.2858\\
            \hline
          \end{tabular} &
          \end{array}}
          \end{equation*}
           \text{\,}\\
           \textbf{Table 3.} $\label{T3}$ Convergence order $CO(\sigma)$ of the proposed approach $(\ref{s1})$-$(\ref{s3})$ with space step $h=2^{-3}$ and different time steps $\sigma$.
           \begin{equation*}
          \small{\begin{array}{c c}
          \text{\,developed computational approach\,\,where\,\,}h=2^{-3}& \\
           \begin{tabular}{cccccccccc}
            \hline
            $\sigma$ & $\||T\|_{0,\infty}$ &$\||T_{h}|\|_{0,\infty}$ & $\||\phi\|_{\bar{0},\infty}$ & $\||\phi_{h}|\|_{\bar{0},\infty}$ & $\||T_{h}-T|\|_{0,\infty}$ & $CO(\sigma)$ & $\||\phi_{h}-\phi|\|_{\bar{0},\infty}$ & $CO(\sigma)$ & CPU(s)\\
             \hline
            $10^{-3}$ & $0.7818$ & $0.7819$ & $2.9932$ & $2.9932$ &  $1.8276\times10^{-2}$ & ....   &  $2.1879\times10^{-2}$ & .... & 26.3108\\

            $10^{-4}$ & $0.7819$ & $0.7820$ & $2.9932$ & $2.9933$ &  $2.120\times10^{-4}$ & 1.9356 &  $2.0330\times10^{-4}$ & 2.0319 & 54.8962\\

            $10^{-5}$ & $0.7819$ & $0.7820$ & $2.9933$ & $2.9933$ & $2.1293\times10^{-6}$ & 1.9981 &  $2.0325\times10^{-6}$ & 2.0001 & 129.5072\\

            $10^{-6}$ & $0.7820$ & $0.7820$ & $2.9933$ & $2.9933$ & $2.1112\times10^{-8}$ & 2.0037 &  $2.0409\times10^{-8}$ & 1.9982 & 361.2085\\
            \hline
          \end{tabular} &
          \end{array}}
          \end{equation*}

            \textbf{Table 4.} $\label{T4}$ (method discussed in \cite{bs}). $\label{T3}$ Errors and computational costs obtained using the unified and mixed discretizations with the
           considered finite elements for the accuracy example, time step $\sigma=10^{-3}$.
           \begin{equation*}
          \begin{tabular}{ccc}
            \hline
            Unified discretization &  &  \\
            \hline
            FEM  & $L^{2}$-norm & CPU(s) \\
            \hline
            $P_{1}/P_{1}/P_{1}$ & $7.6426\times10^{-3}$ & 77 \\
            $P_{2}/P_{2}/P_{2}$ & $8.1175\times10^{-5}$ & 219 \\
            $P_{3}/P_{3}/P_{3}$ & $2.5152\times10^{-6}$ & 1428 \\
            \hline
            Mixed FEM & $L^{2}$-norm & CPU(s)  \\
            \hline
            $P_{2}/P_{1}/P_{1}$ & $8.9647\times10^{-5}$ &  116  \\
            $P_{3}/P_{2}/P_{2}$ & $2.5164\times10^{-6}$ &  482 \\
            \hline
          \end{tabular}
          \end{equation*}

          $\bullet$ \textbf{Example 2}. Consider the benchmark model taken in \cite{bs} for modeling cooling materials such as glass. The domain of fluid is given by $\overline{\Omega}=[0,\text{\,}1]\times[0,\text{\,}1]\times[0,\text{\,}1]$, while the time interval is $[0,\text{\,}1]$. The physical parameters are given by: $\alpha=0$, $\tau=1$, $k_{0}=1$, $c_{m}=1$, $\sigma_{j}=0$, for $j=1,2,3$. The tensor $M=I_{3}$ while the other parameters are provided in \textbf{Table 1}. The initial temperature is $T_{0}=1500K$ whereas the boundary temperature is $T_{m}=300K$\\

          \textbf{Table 5.} $\label{T5}$ Convergence order $CO(h)$ of the proposed approach $(\ref{s1})$-$(\ref{s3})$ with space step $\sigma=2^{-6}$ and different time
          steps $\sigma$.
           \begin{equation*}
          \small{\begin{array}{c c}
          \text{\,proposed numerical technique\,\,where\,\,}\sigma=2^{-6}& \\
           \begin{tabular}{cccccccccc}
            \hline
            $h$ & $\||T\|_{0,\infty}$ &$\||T_{h}|\|_{0,\infty}$ & $\||\phi\|_{\bar{0},\infty}$ & $\||\phi_{h}|\|_{\bar{0},\infty}$ & $\||T_{h}-T|\|_{0,\infty}$ & $CO(h)$ &
             $\||\phi_{h}-\phi|\|_{\bar{0},\infty}$ & $CO(h)$ & CPU(s)\\
             \hline
            $2^{-2}$ & $3549.8$ & $3549.9$ & $4.8027$ & $4.8025$ &  $3.5792\times10^{-1}$ & ....   &  $4.8906\times10^{-2}$ & ....  & 39.3913\\

            $2^{-3}$ & $3549.8$ & $3549.8$ & $4.8027$ & $4.8026$ &  $2.5727\times10^{-2}$ & 3.7983 &  $3.4332\times10^{-3}$ & 3.8324 & 88.9810\\

            $2^{-4}$ & $3549.9$ & $3549.8$ & $4.8028$ & $4.8027$ & $1.7449\times10^{-3}$ & 3.8821 &  $2.3014\times10^{-4}$ & 3.8990 & 221.5121\\

            $2^{-5}$ & $3549.9$ & $3549.9$ & $4.8029$ & $4.8028$ & $1.1085\times10^{-4}$ & 3.9765 &  $1.4557\times10^{-5}$ & 3.9827 & 606.1729\\
            \hline
          \end{tabular} &
          \end{array}}
          \end{equation*}

         \textbf{Table 6.} $\label{T6}$ Convergence order $CO(\sigma)$ of the proposed approach $(\ref{s1})$-$(\ref{s3})$ with space step $h=2^{-4}$ and different time
         steps $\sigma$.
           \begin{equation*}
          \small{\begin{array}{c c}
          \text{\,proposed numerical technique\,\,where\,\,}h=2^{-4}& \\
           \begin{tabular}{cccccccccc}
            \hline
            $\sigma$ & $\||T\|_{0,\infty}$ &$\||T_{h}|\|_{0,\infty}$ & $\||\phi\|_{\bar{0},\infty}$ & $\||\phi_{h}|\|_{\bar{0},\infty}$ & $\||T_{h}-T|\|_{0,\infty}$ &
            $CO(\sigma)$ & $\||\phi_{h}-\phi|\|_{\bar{0},\infty}$ & $CO(\sigma)$ & CPU(s)\\
             \hline
            $10^{-3}$ & $3549.7$ & $3549.8$ & $4.8027$ & $4.8026$ &  $8.2193\times10^{-2}$ & ....   &  $5.4678\times10^{-2}$ & .... & 41.7259\\

            $10^{-4}$ & $3549.8$ & $3549.9$ & $4.8026$ & $4.8027$ &  $9.0685\times10^{-4}$ & 1.9573 &  $5.6626\times10^{-4}$ & 1.9848 & 90.0570\\

            $10^{-5}$ & $3549.8$ & $3549.9$ & $4.8027$ & $4.8027$ & $9.4631\times10^{-6}$ & 1.9815 &  $5.6887\times10^{-6}$ & 1.9980 & 224.5842\\

            $10^{-6}$ & $3549.8$ & $3549.8$ & $4.8028$ & $4.8029$ & $9.1870\times10^{-8}$ & 2.0124 &  $5.6068\times10^{-8}$ & 2.0063 & 654.9099\\
            \hline
          \end{tabular} &
          \end{array}}
          \end{equation*}

          $\bullet$ \textbf{Example 3}. We consider the $SP_{3}$ problem $(\ref{4})$ defined on the domain $\overline{\Omega}=[0,\text{\,}10]^{2}\times[-1,\text{\,}1]$. The final time $T_{f}=1$. The parameters are given by: $\alpha=10^{-2}$, $\tau=1$, $k_{0}=1$, $\sigma_{1}=\sigma_{3}=0$, $\sigma_{2}=10^{-1}$, while the other parameters are provided in \textbf{Table 1}. The thermal conductivity $M$ is nonlinear and anisotropic and it is given by equation $(\ref{7a})$-$(\ref{7aa})$ with
          \begin{equation*}
           m_{11}(T)=m_{33}(T)=5\times10^{-4}T^{2}+2\times10^{-2}T+10^{-1},\text{\,\,\,}m_{22}(T)=2\times10^{-2}T+10^{-1}\text{\,\,\,and\,\,\,}\theta=\frac{\pi}{4}.
          \end{equation*}
           The initial temperature is $T_{0}=1000K$ while the boundary temperature is $T_{m}=300K$.\\

          \textbf{Table 7.} $\label{T7}$ Convergence order $CO(h)$ of the new algorithm $(\ref{s1})$-$(\ref{s3})$ with space step $\sigma=2^{-5}$ and different space
          steps $h$.
           \begin{equation*}
          \small{\begin{array}{c c}
          \text{\,new algorithm\,\,where\,\,}\sigma=2^{-5}& \\
           \begin{tabular}{cccccccccc}
            \hline
            $h$ & $\||T\|_{0,\infty}$ &$\||T_{h}|\|_{0,\infty}$ & $\||\phi\|_{\bar{0},\infty}$ & $\||\phi_{h}|\|_{\bar{0},\infty}$ & $\||T_{h}-T|\|_{0,\infty}$ & $CO(h)$ &
             $\||\phi_{h}-\phi|\|_{\bar{0},\infty}$ & $CO(h)$ & CPU(s)\\
             \hline
            $2^{-2}$ & $24294.00$ & $24294.01$ & $48.0021$ & $48.0023$ &  $3.5792\times10^{-2}$ & ....   &  $4.8906\times10^{-3}$ & .... & 25.4183\\

            $2^{-3}$ & $24294.01$ & $24294.02$ & $48.0021$ & $48.0022$ &  $2.4188\times10^{-3}$ & 3.8873 &  $3.0926\times10^{-4}$ & 3.9831 & 54.5553\\

            $2^{-4}$ & $24294.02$ & $24294.02$ & $48.0022$ & $48.0023$ & $1.5081\times10^{-4}$ &  4.0035 &  $1.9072\times10^{-5}$ & 4.0193 & 134.266\\

            $2^{-5}$ & $24294.02$ & $24294.03$ & $48.0022$ & $48.0024$ & $9.4400\times10^{-6}$ &  3.9978 &  $1.1914\times10^{-6}$ & 4.0007 & 553.3500\\
            \hline
          \end{tabular} &
          \end{array}}
          \end{equation*}

         \textbf{Table 8.} $\label{T8}$ Convergence order $CO(\sigma)$ of the new algorithm $(\ref{s1})$-$(\ref{s3})$ with space step $h=2^{-4}$ and different time
         steps $\sigma$.
           \begin{equation*}
          \small{\begin{array}{c c}
          \text{\,new algorithm\,\,where\,\,}h=2^{-4}& \\
           \begin{tabular}{cccccccccc}
            \hline
            $\sigma$ & $\||T\|_{0,\infty}$ &$\||T_{h}|\|_{0,\infty}$ & $\||\phi\|_{\bar{0},\infty}$ & $\||\phi_{h}|\|_{\bar{0},\infty}$ & $\||T_{h}-T|\|_{0,\infty}$ &
            $CO(\sigma)$ & $\||\phi_{h}-\phi|\|_{\bar{0},\infty}$ & $CO(\sigma)$ & CPU(s)\\
             \hline
            $10^{-3}$ & $24294.01$ & $24294.00$ & $48.0022$ & $48.0021$ &  $6.0021\times10^{-3}$ & ....   &  $1.2151\times10^{-3}$ & .... & 32.6123\\

            $10^{-4}$ & $24294.01$ & $24294.01$ & $48.0022$ & $48.0021$ &  $7.5981\times10^{-5}$ & 1.8976 &  $1.2468\times10^{-5}$ & 1.9888 & 69.9818\\

            $10^{-5}$ & $24294.02$ & $24294.01$ & $48.0023$ & $48.0022$ & $1.0179\times10^{-6}$ & 1.9730 &  $1.2552\times10^{-7}$ & 1.9971 & 171.9375\\

            $10^{-6}$ & $24294.02$ & $24294.02$ & $48.0021$ & $48.0023$ & $9.6918\times10^{-9}$ & 2.0213 &  $1.2549\times10^{-9}$ & 2.0001 & 662.8750\\
            \hline
          \end{tabular} &
          \end{array}}
          \end{equation*}

          \textbf{Tables 2-3 $\&$ 5-8} suggest that the constructed two-stage explicit/implicit computational technique combined with the mixed FEM ($\mathcal{P}_{4}/\mathcal{P}_{3}$) given by equations $(\ref{s1})$-$(\ref{s3})$, for solving the initial-boundary value problem $(\ref{4})$-$(\ref{6})$, is temporal second-order accurate and spatial fourth-order convergent, while Figures $\ref{fig1}$-$\ref{fig3}$ show that the new algorithm is stable for small values of the time step $\sigma$. In addition, it follows from both tables and figures that the approximate solutions do not increase with time and converge to the analytical one.\\

         Furthermore, both computed temperature and $\phi_{jh}$, for $j=1,2$, are displayed in Figures $\ref{fig1}$-$\ref{fig3}$. A time step $\sigma\in\{10^{-l},\text{\,l=3,4,5,6}\}$, and space step $h=2^{-m},$ for $m=2,3,4,5$, are used. The figures provide the approximate temperature and functions $\phi_{jh}$, $j=1,2$, at time $t=1.5\times10^{-3}$, $5.5\times10^{-5}$ and $2\times10^{-3}$. Additionally, they indicate that the approximate solutions propagate with almost a perfectly value at different positions. As a result, the numerical solutions do not increase with time. Moreover, they show that the developed method $(\ref{s1})$-$(\ref{s3})$ is not unconditionally unstable, but stability depends on the parameter $\sigma$.

       \section{General conclusions and future works}\label{sec6}
        This paper has constructed a strong two-stage explicit/implicit numerical approach combined with the mixed FEM ($\mathcal{P}_{p}/\mathcal{P}_{p-1}$), in an approximate solution of the nonlinear $SP_{3}$ approximations $(\ref{4})$-$(\ref{6})$, associated with the three-dimensional nonlinear radiation-conduction problems in anisotropic media $(\ref{1})$-$(\ref{3})$. Under a necessary restriction on the time step given by estimate $(\ref{ts})$, the stability of the new algorithm has been deeply analyzed in the $L^{2}$-norm. The theoretical studies have indicated that the proposed computational technique is stable and temporal second-order convergent for values of the time step small enough. These theoretical results have been confirmed by some numerical examples. Specifically, Figures $\ref{fig1}$-$\ref{fig3}$ suggest that the proposed numerical approach $(\ref{s1})$-$(\ref{s3})$ is stable while \textbf{Tables 2-3 $\&$ 5-8} show that the developed technique is temporal second-order accurate and fourth-order convergent in space. These results show that the proposed approach is more efficient than a wide set of numerical methods discussed in the literature \cite{bs,43bs} for solving the $SP_{N}$ approximations. Our future works will analyze a suitable time step restriction for stability together with the error estimates of the proposed strong two-stage explicit/implicit approach combined with the mixed FEM for solving the $SP_{3}$ model.

      \subsection*{Ethical Approval}
     Not applicable.
     \subsection*{Availability of supporting data}
     Not applicable.
     \subsection*{Declaration of Interest Statement}
     The author declares that he has no conflict of interests.
     \subsection*{Funding}
     No applicable
     \subsection*{Authors' contributions}
     The whole work has been carried out by the author.

     \begin{figure}
         \begin{center}
         Stability analysis of the proposed computational approach for nonlinear radiation-conduction model, with $\sigma=10^{-4}$ and $h=2^{-2}$.
         \begin{tabular}{c c}
         \psfig{file=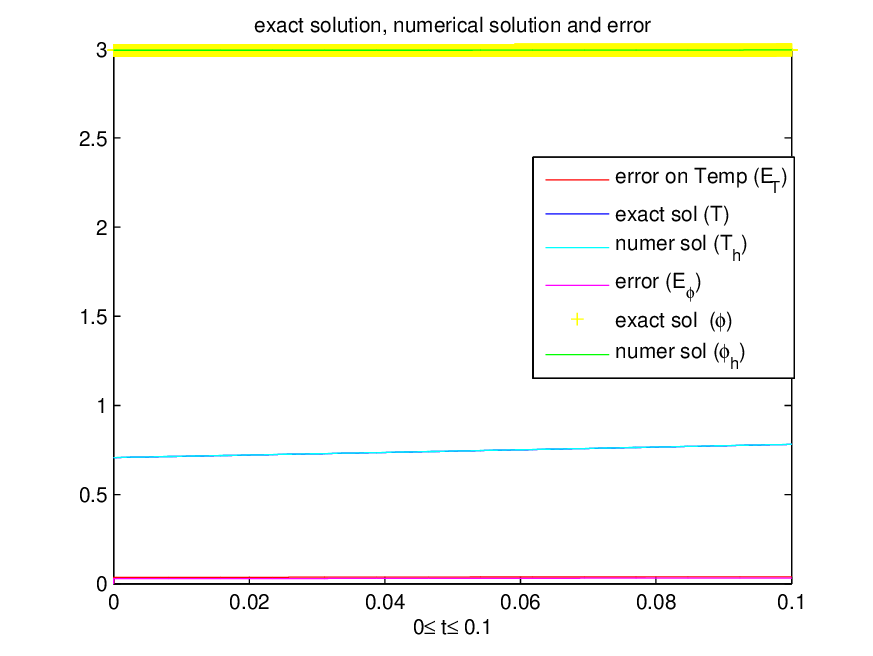,width=7cm} & \psfig{file=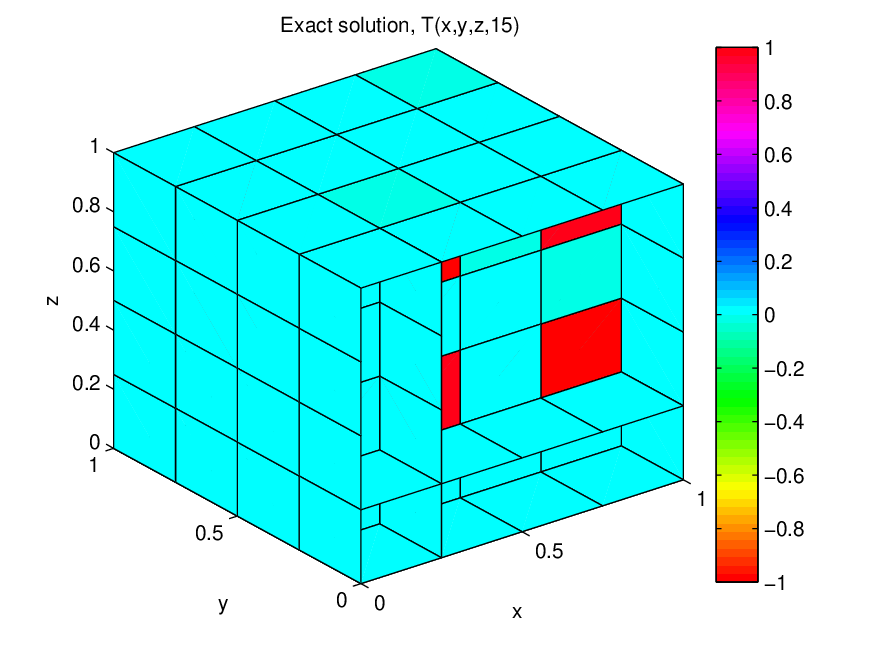,width=7cm}\\
         \psfig{file=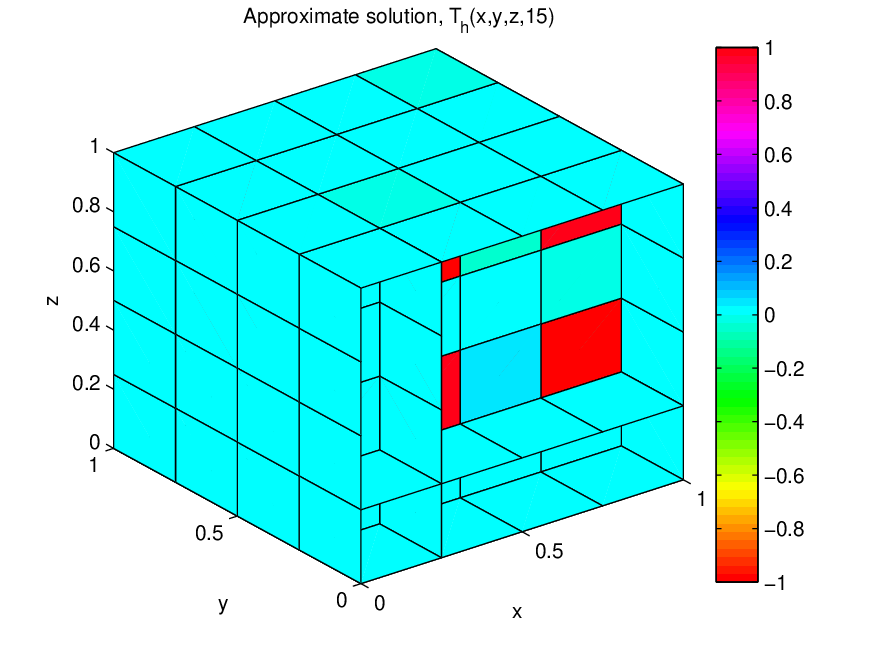,width=7cm} & \psfig{file=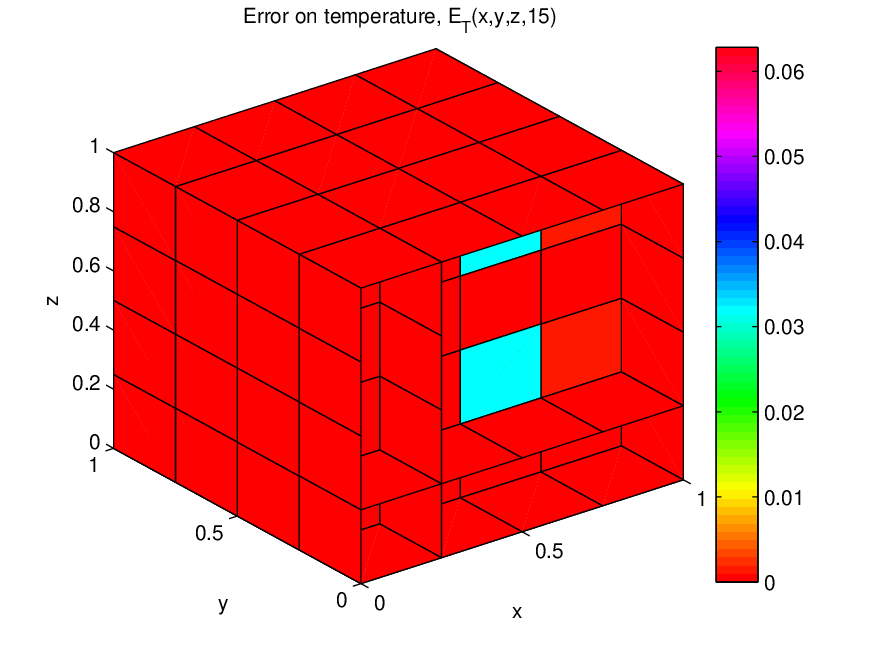,width=7cm}\\
         \psfig{file=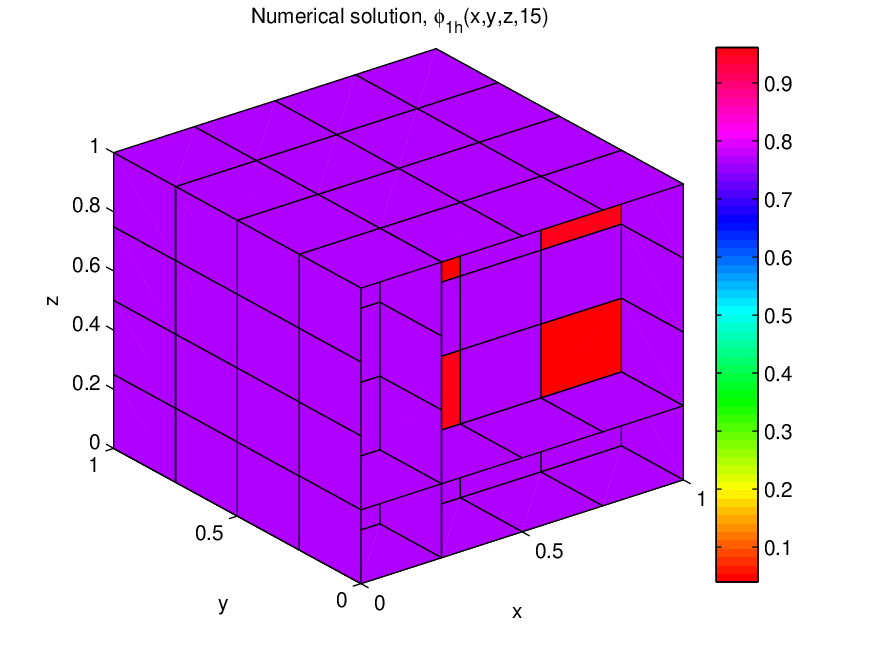,width=7cm} & \psfig{file=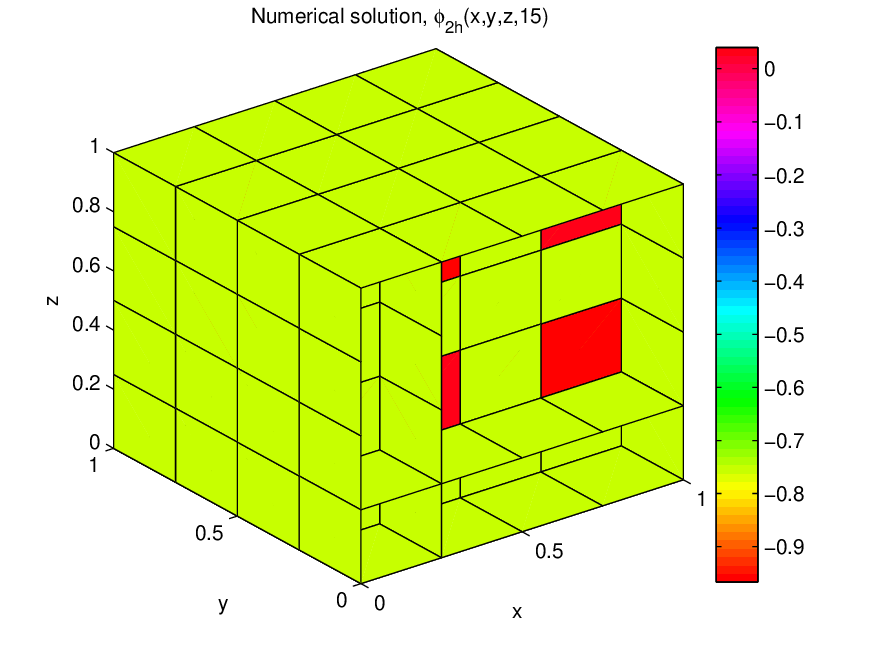,width=7cm}\\
         \psfig{file=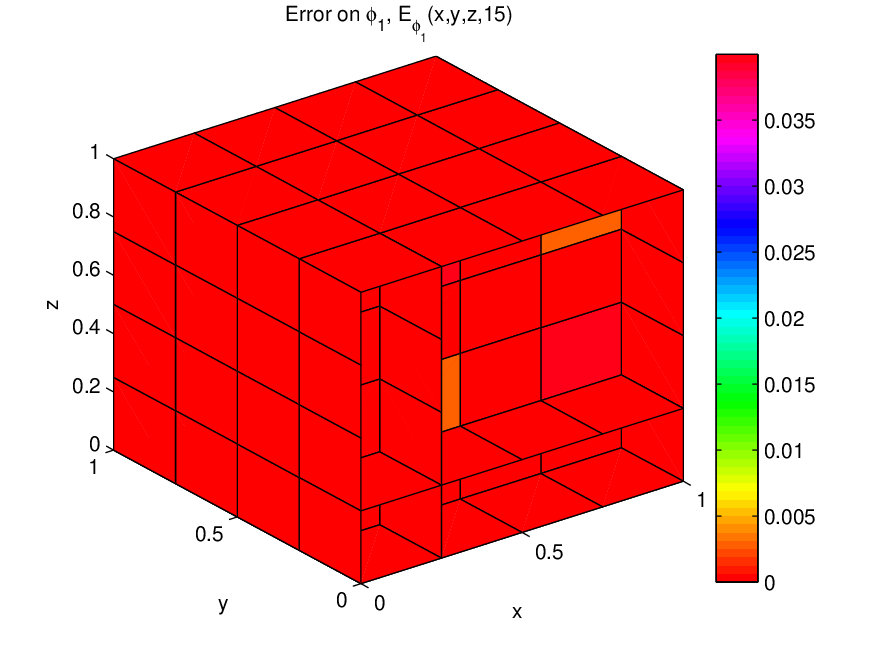,width=7cm} & \psfig{file=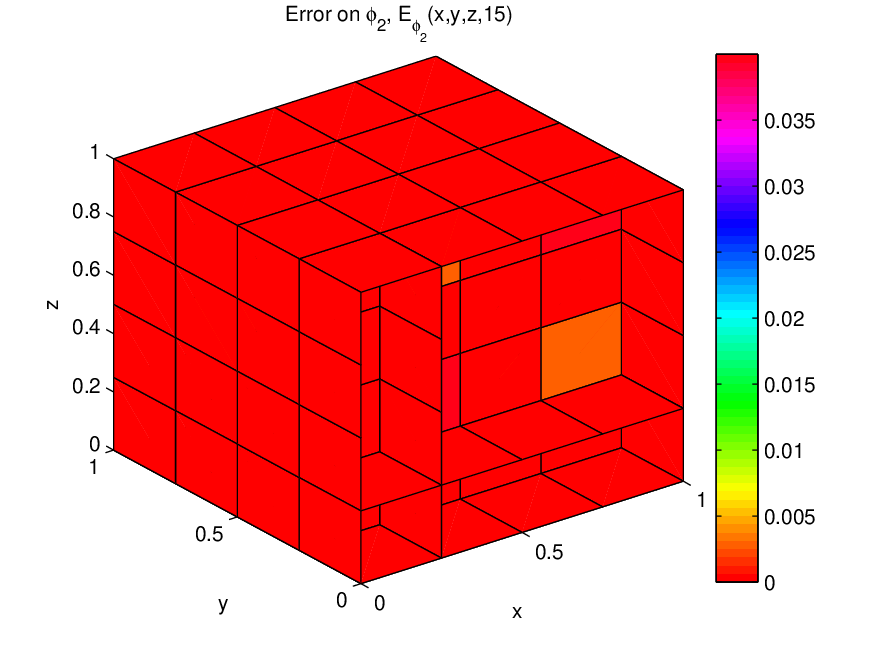,width=7cm}
         \end{tabular}
        \end{center}
        \caption{Graphs of exact and computed solutions for Temperature $(T\text{\,and\,}T_{h})$, related approximate radiation intensity $(\phi_{1h}\text{\,and\,}\phi_{2h})$
        and errors $(E_{T}\text{\,and\,}E_{\phi_{j}})$, associated with Example 1.}
        \label{fig1}
        \end{figure}

       \begin{figure}
       \begin{center}
       Stability analysis of the new algorithm for nonlinear radiation-conduction model, with $\sigma=10^{-6}$, $h=2^{-3}$.
       \begin{tabular}{c c}
         \psfig{file=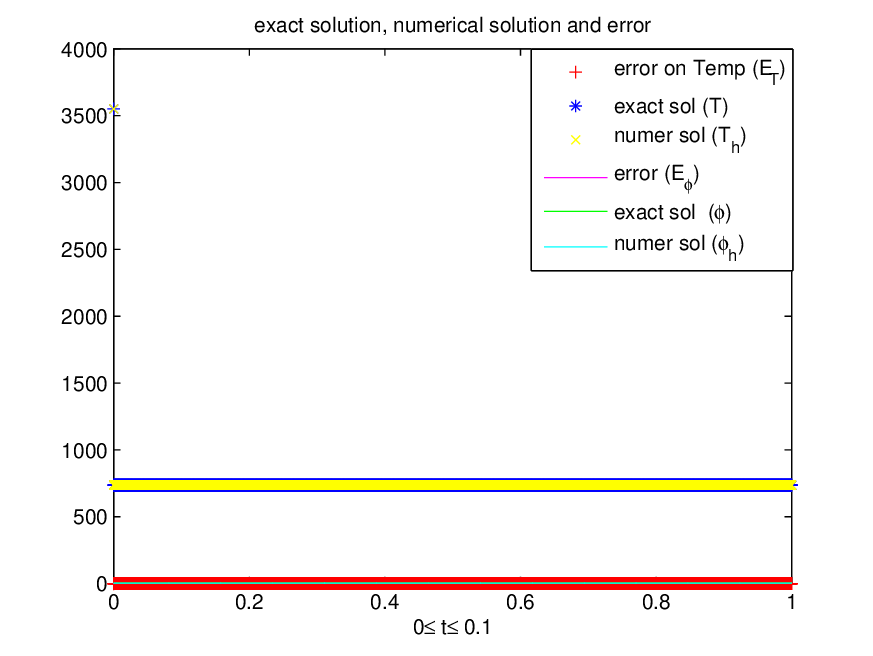,width=7cm} & \psfig{file=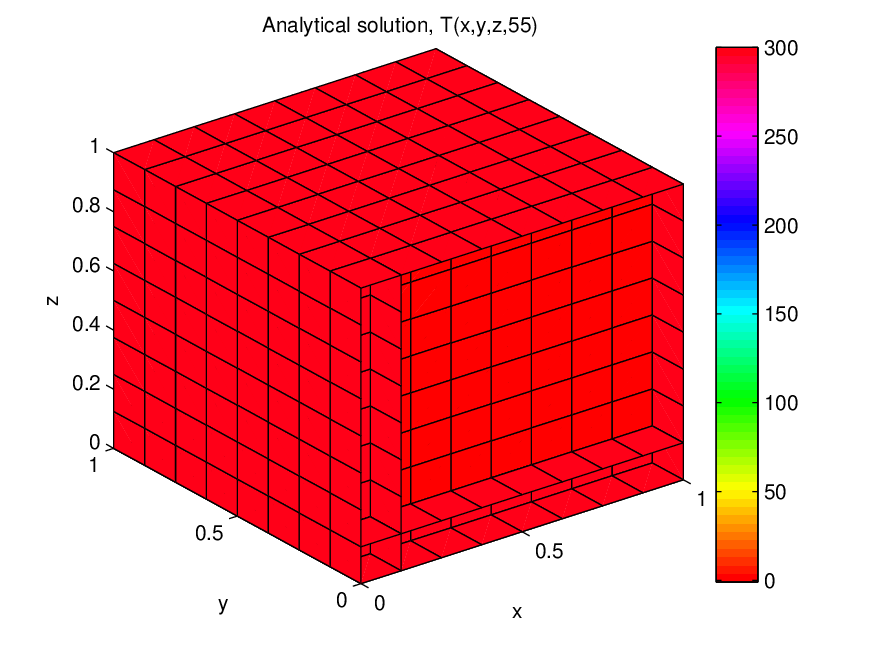,width=7cm}\\
         \psfig{file=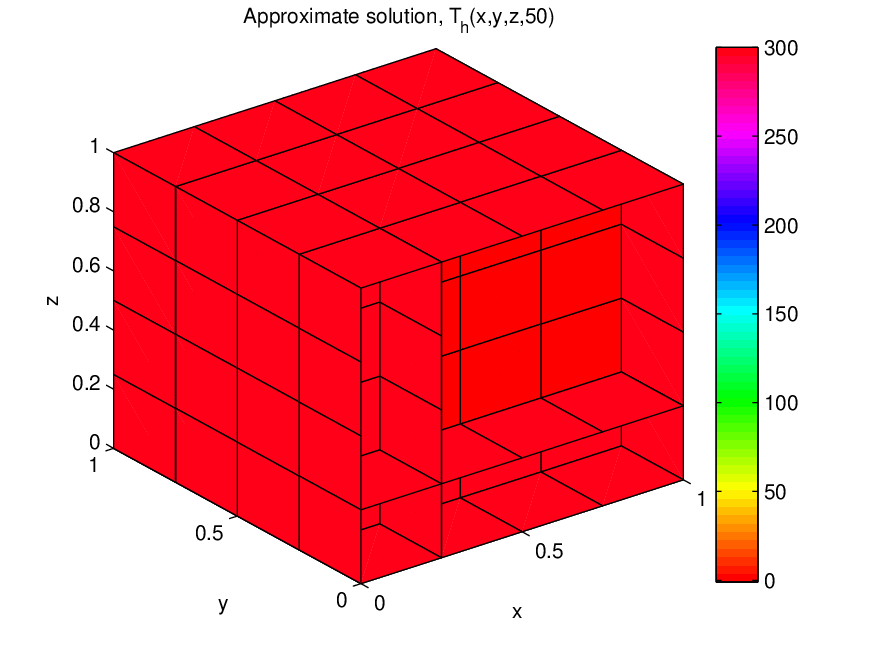,width=7cm} & \psfig{file=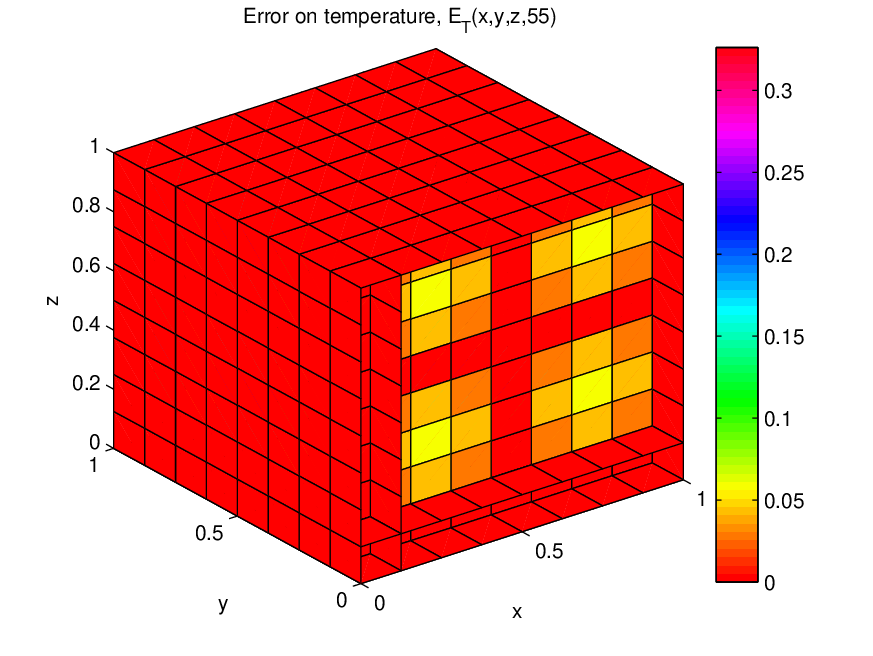,width=7cm}\\
         \psfig{file=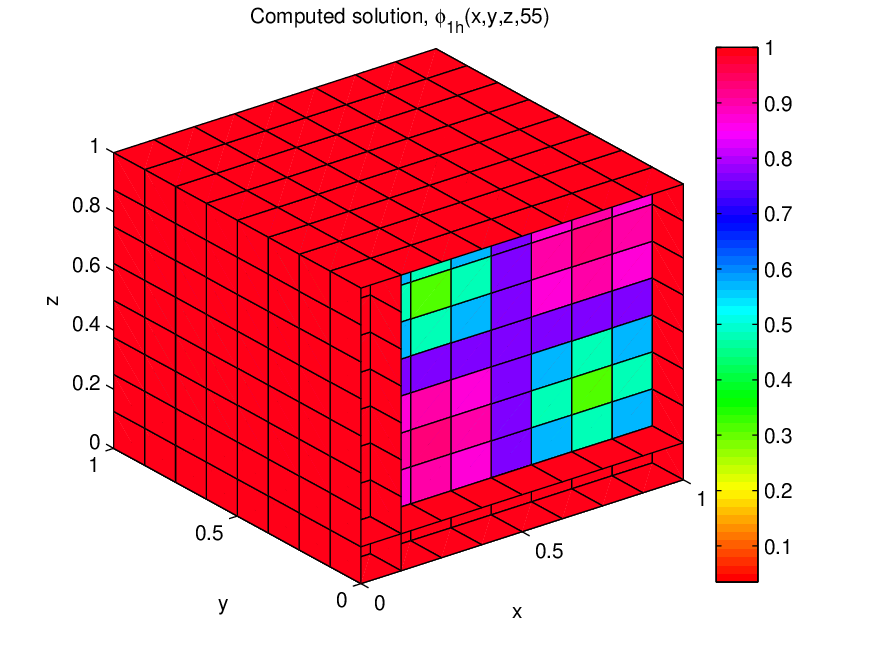,width=7cm} & \psfig{file=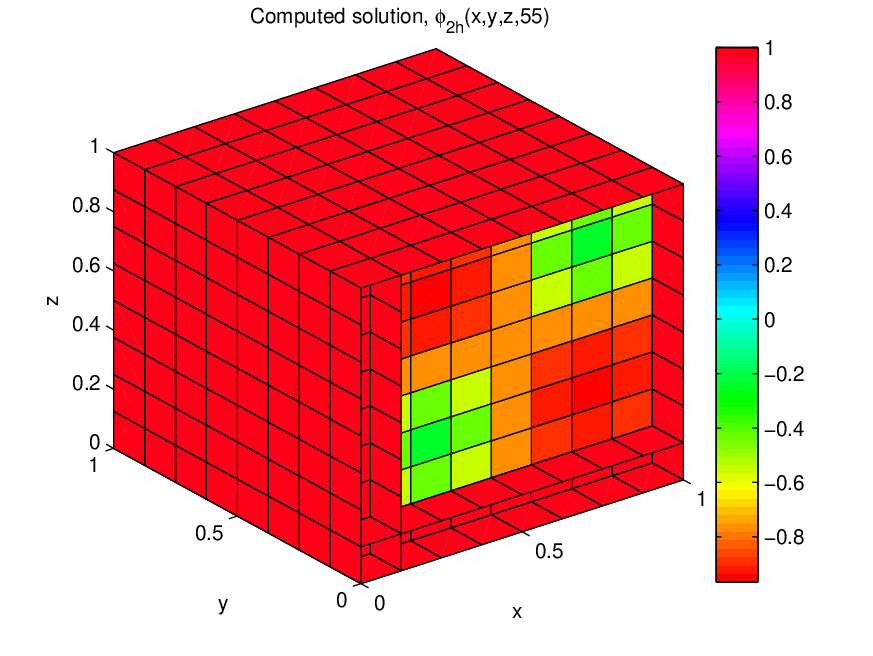,width=7cm}\\
         \psfig{file=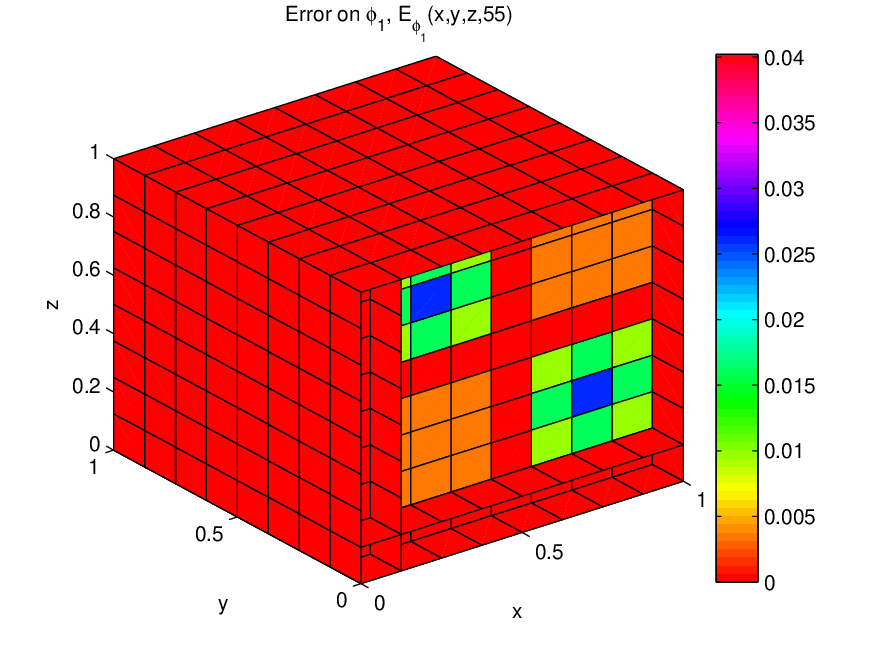,width=7cm} & \psfig{file=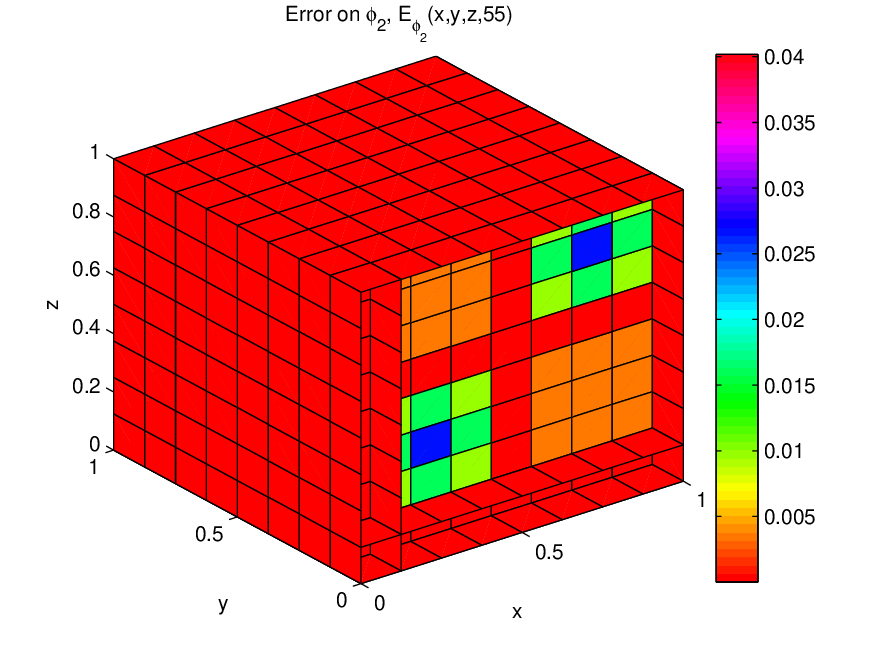,width=7cm}
         \end{tabular}
        \end{center}
         \caption{Graphs of analytical and approximate solutions for Temperature $(T\text{\,and\,}T_{h})$, related approximate radiation intensity
         $(\phi_{1h}\text{\,and\,}\phi_{2h})$ and errors $(E_{T}\text{\,and\,}E_{\phi_{j}})$, corresponding to Example 2.}
          \label{fig2}
          \end{figure}

        \begin{figure}
       \begin{center}
       Stability analysis of the constructed computational technique for nonlinear radiation-conduction model, with $\sigma=10^{-5}$, $h=2^{-3}$.
       \begin{tabular}{c c}
         \psfig{file=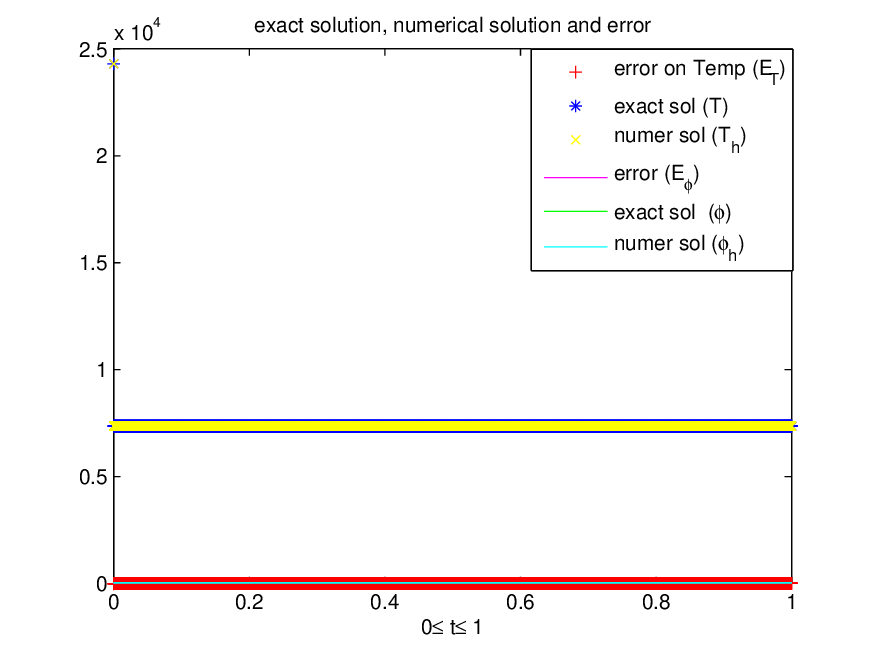,width=7cm} & \psfig{file=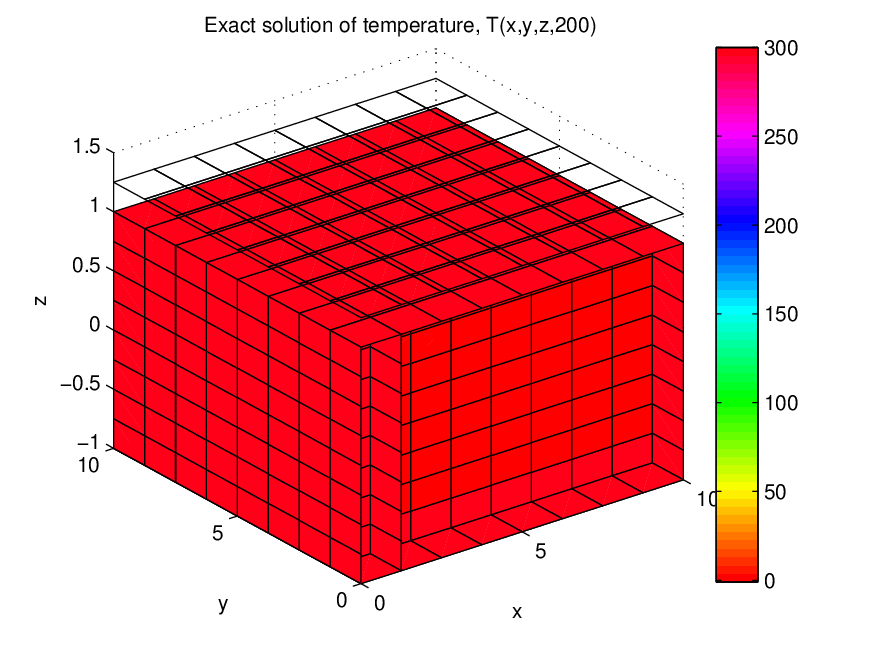,width=7cm}\\
         \psfig{file=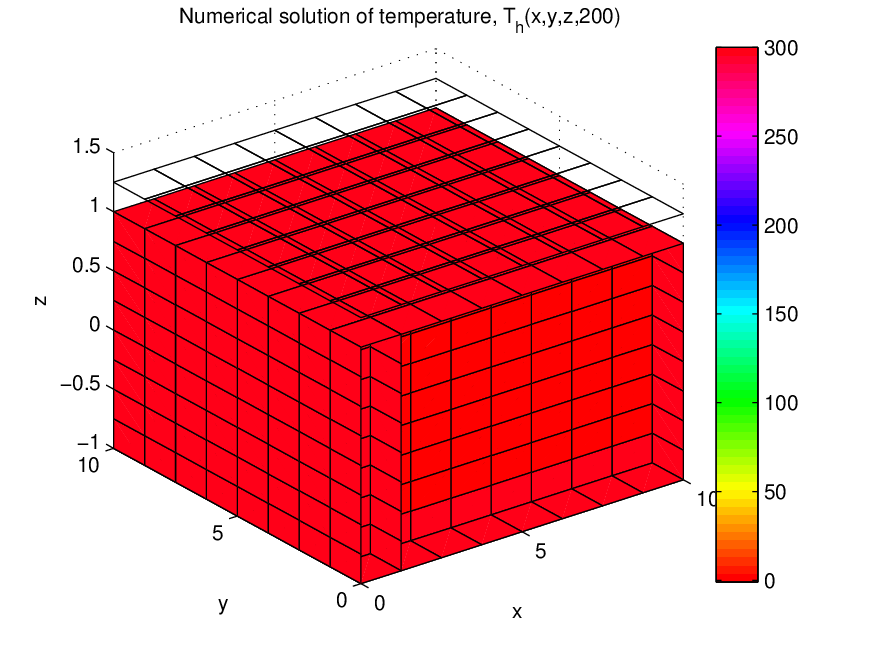,width=7cm} & \psfig{file=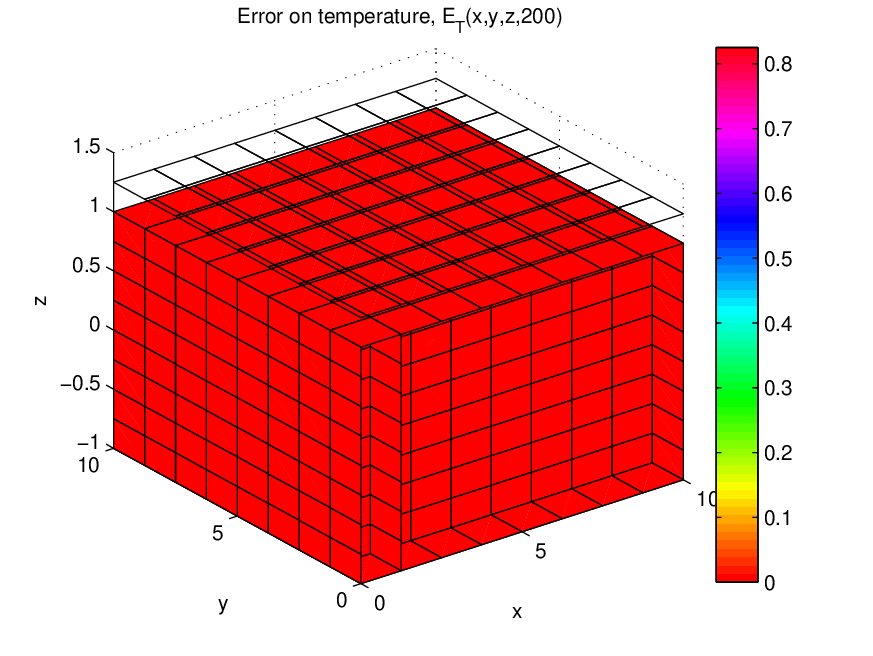,width=7cm}\\
         \psfig{file=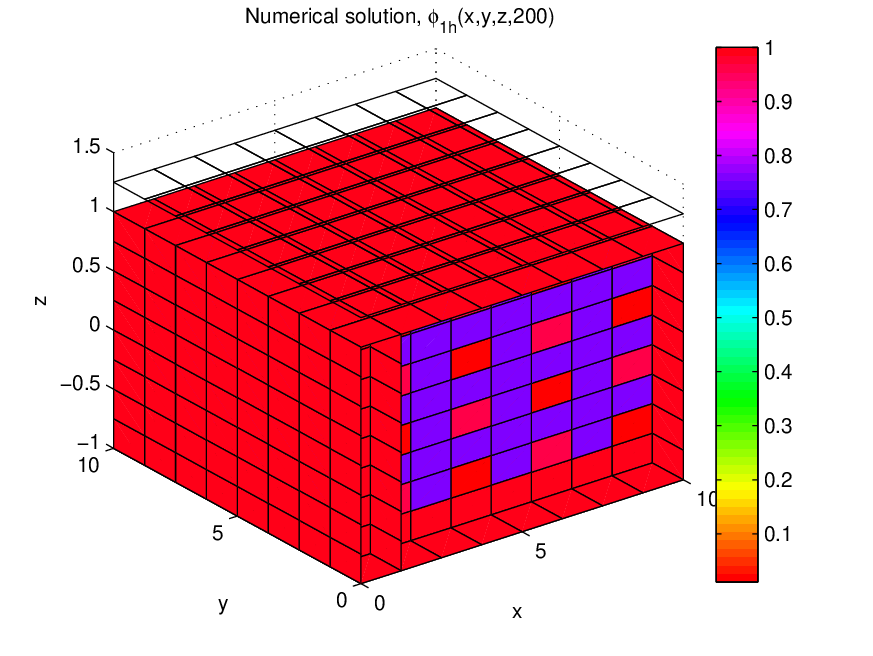,width=7cm} & \psfig{file=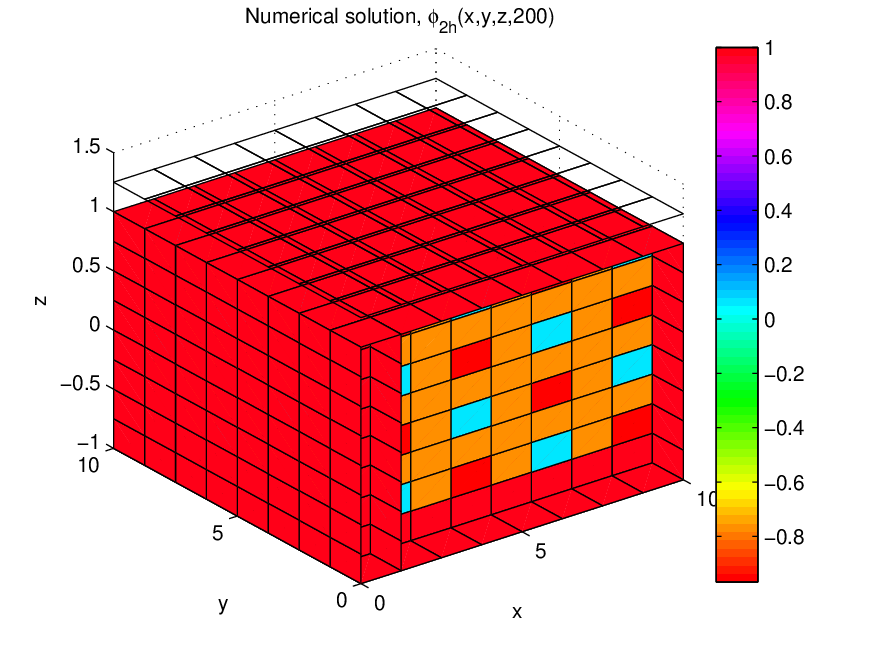,width=7cm}\\
         \psfig{file=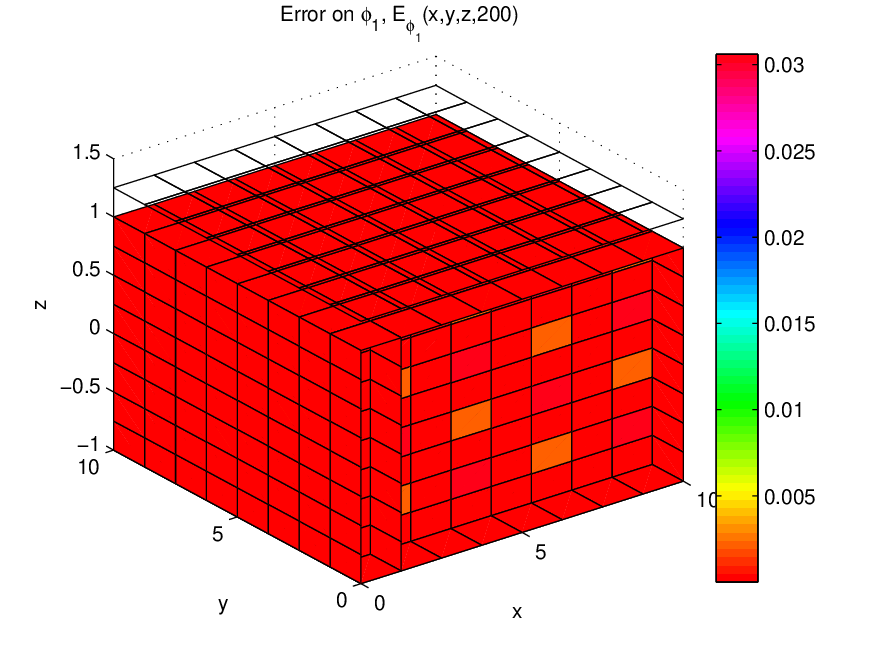,width=7cm} & \psfig{file=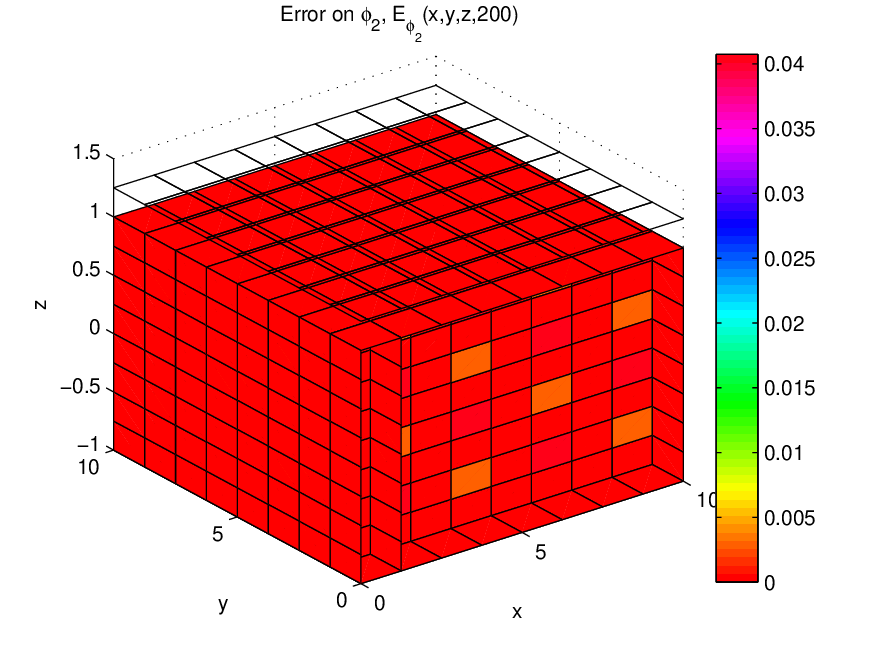,width=7cm}
         \end{tabular}
        \end{center}
         \caption{Graphs of analytical and numerical solutions for Temperature $(T\text{\,and\,}T_{h})$, related approximate radiation intensity
         $(\phi_{1h}\text{\,and\,}\phi_{2h})$ and errors $(E_{T}\text{\,and\,}E_{\phi_{j}})$, corresponding to Example 3.}
          \label{fig3}
          \end{figure}
     \end{document}